\documentclass[11pt,a4paper]{amsart}

\usepackage{
amssymb,amsmath,amscd, amsthm,stmaryrd,
verbatim, mathrsfs,tikz,color, amsfonts,enumerate, esvect,graphicx
}

\usepackage[T1]{fontenc}
\usepackage{lmodern}
\usepackage{mathtools}
\usepackage[a4paper,margin=28mm]{geometry}
\usepackage{microtype}
\usepackage{enumitem}
\usepackage{needspace}

\usepackage[
colorlinks,linkcolor=blue,urlcolor=cyan,citecolor=cyan
]{hyperref}
\hypersetup{pdftitle={Parabolic filtrations of Soergel bimodules},
 pdfsubject={Parabolic filtrations and mixed Kazhdan--Lusztig positivity},
 pdfkeywords={Soergel bimodules, Hecke algebras, hybrid basis, Kazhdan--Lusztig basis, parabolic subgroups}}
\numberwithin{equation}{section}
\newtheorem{theorem}{Theorem}[section]
\newtheorem{proposition}[theorem]{Proposition}
\newtheorem{lemma}[theorem]{Lemma}
\newtheorem{corollary}[theorem]{Corollary}
\theoremstyle{definition}
\newtheorem{definition}[theorem]{Definition}

\theoremstyle{remark}

\newcommand{\cB}{\mathcal B}
\newcommand{\cF}{\mathcal F}
\newcommand{\cH}{\mathcal H}
\newcommand{\kk}{\mathbb R}
\newcommand{\bbZ}{\mathbb Z}
\newcommand{\bbN}{\mathbb N}
\newcommand{\FD}{\cF_\Delta}
\newcommand{\FI}{\cF_\Delta^I}
\newcommand{\Wmin}{{}^IW}
\newcommand{\Acoef}{\bbZ[v,v^{-1}]}
\newcommand{\Homg}{\underline{\operatorname{Hom}}}
\newcommand{\Gr}{\operatorname{Gr}}
\newcommand{\End}{\operatorname{End}}
\newcommand{\Supp}{\operatorname{Supp}}
\newcommand{\Ann}{\operatorname{Ann}}
\newcommand{\Res}{\operatorname{Res}}
\newcommand{\Spec}{\operatorname{Spec}}
\newcommand{\grk}{\operatorname{grk}}
\newcommand{\ch}{\operatorname{ch}}
\newcommand{\id}{\operatorname{id}}
\newcommand{\Ind}{\operatorname{Ind}}
\newcommand{\len}{\ell}
\newcommand{\Dy}[2]{\mathsf D_{#1}(#2)}
\newcommand{\GY}[1]{G^I_{#1}}
\newcommand{\step}[1]{\par\smallskip\noindent\emph{#1}\enspace}
\setlist[enumerate]{label=\textup{(\roman*)},leftmargin=2.2em,
 itemsep=4pt,topsep=5pt,parsep=0pt}
\allowdisplaybreaks[1]

\usepackage{mathtools}
\mathtoolsset{showonlyrefs}

\usepackage{enumitem}
\setlist{itemsep=1pt}

\usepackage{pdfpages} 

\usepackage{todonotes}

\numberwithin{equation}{section}
\allowdisplaybreaks

\title[Parabolic filtrations]{Parabolic filtrations of Soergel bimodules}
\author{Xun Xie}
\author{Qifan Zhou}
\date{}
\keywords{Soergel bimodules, Hecke algebras, hybrid basis, Kazhdan--Lusztig basis,
parabolic subgroups }

\begin{document}
\begin{abstract}
Given a Coxeter system $(W,S)$ and a subset $I\subset S$. Let $W_I=\langle I\rangle$ be the standard parabolic subgroup of a Coxeter group $W$, and $\Wmin$ be the set of representatives of minimal length in the coset $W_I\backslash W$. We construct a parabolic version of Soergel's standard filtration, and show that, given any Soergel bimodule, each subquotient of its parabolic filtration is a direct sum of grading shifts of $B_x^I\otimes_R R_y$ with $x\in W_I$ and $y\in\Wmin$. By applying the character theorem of Elias and Williamson, we deduce the positivity of coefficients in the expansion of $C_w$ with respect to the hybrid basis $\{C_xT_y:x\in W_I,y\in\Wmin\}$. This generalize Grojnowski and Haiman's result to any Coxeter group.
\end{abstract}
\maketitle

\section{Introduction}

For Hecke algebra $\cH$ of any Coxeter system $(W,S)$, Kazhdan and Lusztig \cite{KL79} defined the so-called Kazhdan-Lusztig polynomials as the entries of transformation matrix between the Kazhdan-Lusztig basis and the standard basis. In 1980, Kazhdan and Lusztig \cite{KL80} proved that these polynomials have non-negative coefficients for any finite and affine Weyl group. Their result was based on the geometric categorification of the Hecke algebra. Later, Elias and Williamson \cite{EW14} proved the positivity of Kazhdan-Lusztig polynomials for arbitrary Coxeter systems by the meaning of Soergel bimodules.

There are still some well-known positivity problems in Kazhdan-Lusztig theory. For example, Lusztig \cite[Section 3]{Lus85} mentioned that the structure constants of Kazhdan-Lusztig basis of any affine Hecke algebra have positive coefficients, and this is true for all Hecke algebras by \cite{EW14}. Another positivity theorem was given by Grojnowski and Haiman \cite{GH07}. They considered the "hybrid basis" of $\cH$, whose elements are products of some Kazhdan-Lusztig basis and standard basis elements. And they showed that if $W$ is a Coxeter group of a symmetrizable Kac-Moody algebra, the operator of right multiplication by a Kazhdan-Lusztig basis element has positive matrix coefficients with respect to the hybrid basis. Their result generalizes the two above positivity theorems. To prove their main theorem, Grojnowski and Haiman used the geometric categorification of the Hecke algebra, thus $W$ should be the Coxeter group of a symmetrizable Kac-Moody algebra. Following from \cite{Soe07}, we can now categorify any Hecke algebra via Soergel bimodules, so it is natural to ask if one can generalize Grojnowski and Haiman's work to all Hecke algebras.

Let $(W,S)$ be a Coxeter system with $S$ finite, and let $I\subseteq S$.
Denote by $W_I$ the subgroup of $W$ generated by $I$ and by $\Wmin$ the set of representatives of minimal length in the coset $W_I\backslash W$.
Each $w\in W$ has a unique expression $w=x\cdot y$ with $x\in W_I$ and
$y\in\Wmin$, and $\len(w)=\len(x)+\len(y)$.

Let $\cH$ be the Hecke algebra associated to $(W,S)$ over $\Acoef$. $\cH$ admits two bases: the standard basis $\{T_w:w\in W\}$ and the Kazhdan-Lusztig basis $\{C_w:w\in W\}$. For any $I\subset S$, the set $\{C_xT_y:x\in W_I,y\in\Wmin\}$ also forms a basis of $\cH$, which is called the hybrid basis. The definition of hybrid basis was inspired by work of Geck \cite{Gec03}, and many people are interested in the associated positivity problem. As a corollary of Grojnowski and Haiman's result, in the expansion of any $C_w$ with respect to the hybrid basis, all coefficients are polynomials in $\bbN[v,v^{-1}]$. We will generalize this result to any $W$ in this paper.

In \cite{Wil20}, Williamson introduced this positivity theorem and gave some examples. In this paper, we will give a complete proof of this problem. Our proof is mainly based on the theory of Soergel bimodules. Following Soergel's idea in \cite{Soe07}, we construct a parabolic version of standard filtration and prove that the expansion of $C_w$ with respect to the hybrid basis $\{C_xT_y:x\in W_I,y\in\Wmin\}$ corresponds to this parabolic filtration. Thus we can deduce the positivity of coefficients.

\subsection{The parabolic filtration theorem}\label{sec:int_to_main}

Fix the standard real reflection-faithful representation $V$ of $W$ in \cite[Section 2]{Soe07}. Let $R$ be the ring of regular functions on $V$, it admits a $\bbZ$ grading with the dual space $V^*$
in degree $2$. For any graded $R$ bimodule $M$, we also regard it as an $R\otimes_\kk R$ module. We use $[-]$ to denote the grading shift functor, i.e. $M[n]_d=M_{d+n}$.

For $u\in W$, let $R_u$ and $B_u$ be the graded $R$ bimodules defined in \cite{Soe07}.
Let $\cB(W)$ denote the category of Soergel bimodules associated to $W$, and let
$\cB_I$ be its full subcategory generated by the bimodules $B_t$, $t\in I$, under tensor products, finite direct sums, grading shifts and direct summands. We would add a subscript $\cdot^I$ to denote the indecomposable objects in $\cB_I$.

For $u\in W$, put
\[
 \Gr(u)=\{(u\xi,\xi):\xi\in V\},\qquad \Delta_u=R_u[-\len(u)].
\]

For $U\subseteq W$, the notation $\Gamma_U M$ denotes the submodule of $M$ consisting of
elements whose support is contained in $\Gr(U)=\bigcup_{u\in U}\Gr(u)$.

Let $\pi_I:W\to\Wmin$ send $w=x\cdot y$ to $y$, where $x\in W_I$ and $y\in\Wmin$. For $y\in\Wmin$, define
\begin{equation}\label{eq:inverse-set}
    U_y=\pi_I^{-1}(\{z\in\Wmin:z\geq y\}),\qquad
    U_y^+=\pi_I^{-1}(\{z\in\Wmin:z>y\}).
\end{equation}
The submodules $\Gamma_{U_y}M$ with the $y$'s in decreasing Bruhat order give a filtration of $M$, we call it the \textbf{parabolic filtration}. We obtain the following subquotient
\begin{equation}\label{eq:coset-layer}
 \GY{y}(M)=\Gamma_{U_y}M/\Gamma_{U_y^+}M.
\end{equation}
Following from Soergel's construction, if $M$ has a standard filtration, so is $\GY{y}(M)$. For any bimodule $N$, set
\begin{equation}\label{eq:tensor-std-mod}
    \Dy{y}{N}=N\otimes_R\Delta_y.
\end{equation}

The following is our main theorem, in which we describe the subquotients of the parabolic filtration of $B\in\cB(W)$.
\begin{theorem}\label{thm:main}
The following statements hold.
\begin{enumerate}
\item For any $B\in\cB(W)$ and $y\in\Wmin$, there is an object
$N_y\in\cB_I$ such that
\begin{equation}\label{eq:main}
 \GY{y}(B)\cong\Dy{y}{N_y}.
\end{equation}
\item In particular, for $w\in W$, there are unique nonnegative integers $m^w_{x,y,n}$
such that
\begin{equation}\label{eq:multiplicity}
 \GY{y}(B_w)\cong
 \bigoplus_{x\in W_I,\ n\in\bbZ}
 \left(\bigl(B_x^I\otimes_R\Delta_y\bigr)[n]\right)^{\oplus m^w_{x,y,n}}.
\end{equation}
Only finitely many triples $(x,y,n)$ have nonzero multiplicity.
\end{enumerate}
\end{theorem}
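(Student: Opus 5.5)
Part (i) will be proved by induction on Bott--Samelson bimodules, following Soergel's proof that Soergel bimodules admit standard filtrations. Since $\GY{y}$ is additive and commutes with grading shifts, and the class of objects of the form $\Dy{y}{N}$ with $N\in\cB_I$ is closed under direct summands, it is enough to treat $B=BS(\underline s)=B_{s_1}\otimes_R\cdots\otimes_R B_{s_k}$. The summand claim rests on a Krull--Schmidt argument. A summand of $\Dy{y}{N}$ is a summand of $N\otimes_R\Delta_y$, and the functor $-\otimes_R\Delta_y$ is an equivalence from $R$-bimodules to $R$-bimodules with twisted right action, with inverse $-\otimes_R\Delta_{y^{-1}}[\ldots]$. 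So summands correspond to summands of $N$, which again lie in $\cB_I$.

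\step{Induction on $k$.} The case $k=0$ gives $B=R$, with $\GY{e}(R)=R=\Dy{e}{R}$ and all other layers zero. For the inductive step, write $B'=B\otimes_R B_s$. The key lemma is a description of $\GY{y}(B\otimes_R B_s)$ in terms of the layers of $B$. Fix $y\in\Wmin$. There are two cases.

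\begin{enumerate}
\item Suppose $ys\in\Wmin$. Then right multiplication by $s$ preserves the coset structure, since $W_I y s = W_I(ys)$. The Soergel argument (the exact sequences $0\to \Delta_{u}\otimes B_s\to\ldots$ relating $\Gamma_{\le}$ and $\Gamma_{\ge}$ for $R_u\otimes_R B_s$, which is an extension of $R_u$ and $R_{us}$ up to shifts) should yield a short exact sequence
\[
0\to \GY{y'}(B)\otimes_R \Delta_{s}[\ast]\to \GY{y}(B\otimes_R B_s)\to \GY{y''}(B)[\ast]\to 0,
\]
where $\{y',y''\}=\{y,ys\}$ is ordered by Bruhat order. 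Applying the inductive hypothesis and $\Delta_{y'}\otimes\Delta_s\cong\Delta_{y's}$ for $y'\in\Wmin$, both ends become $\Dy{y}{N}$ and $\Dy{y}{N'}$. The sequence should then split. The reason is that the two ends have supports on different graphs relative to the quotient filtration, and we can use Soergel's splitting: $\Gamma$-filtrations of bimodules that are free over $R$ on each side split over $R\otimes R$ after localization. More concretely, I would argue as follows. The extension lies in $\operatorname{Ext}^1$ between $\Dy{y}{N}$ and $\Dy{y}{N'}$, and pulling back by the twist reduces it to an extension between $R$-bimodules in $\cB_I$ supported on $\Gr(W_I)$. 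Alternatively, I would realize the middle term directly as $\Dy{y}{N\oplus N'}$ by exhibiting the map.
\item Suppose $ys\notin\Wmin$. Then $ys=ty$ with $t\in I$ (Deodhar's lemma). In this case $U_y s=U_y$, so $\Gamma_{U_y}$ commutes with $-\otimes_R B_s$ up to the appropriate exactness. One then gets $\GY{y}(B\otimes_R B_s)\cong \GY{y}(B)\otimes_R B_s$. Combined with the isomorphism $\Delta_y\otimes_R B_s\cong B_t\otimes_R\Delta_y$, which holds because $B_s=R\otimes_{R^s}R$ and twisting by $y$ carries $R^s$ to $R^{t}$, this yields $\Dy{y}{N\otimes_R B_t}$ with $N\otimes B_t\in\cB_I$.
\end{enumerate}

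The main obstacle is the exactness of $\Gamma_{U_y}(-)\otimes_R B_s$ and its compatibility with these subquotients, together with the splitting in case (i). For the former I would use the fact that $B_s$ is free as a left $R$-module, and Soergel's lemma that $\Gamma_U(M\otimes B_s)$ can be computed from $\Gamma_{U\cup Us}$ and $\Gamma_{U\cap Us}$ for $M$ with a standard filtration. I would also use that $U_y$ and $U_y^+$ are upward closed, so Soergel's results for upward closed sets apply.

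Part (ii) then follows. Apply (i) to $B_w$, decompose $N_y$ into indecomposables $B_x^I[n]$ by Krull--Schmidt in $\cB_I$ (Soergel's classification applied to the Coxeter system $(W_I,I)$ acting on the reflection-faithful representation $V$), and use that tensoring with $\Delta_y$ commutes with direct sums. For uniqueness, note that $\Dy{y}{-}$ reflects isomorphism classes (it is an equivalence with inverse twisting by $y^{-1}$), so the multiplicities are determined by Krull--Schmidt for graded $R$-bimodules. For finiteness, $B_w$ is finitely generated, so only finitely many $y$ satisfy $\GY{y}\ne0$ (those with $y\le\pi_I(\text{support})\subseteq\{\le w\}$), and each $N_y$ is a finite sum.
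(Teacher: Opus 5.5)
Your overall architecture matches the paper's: closure under sums, shifts and summands; induction on Bott--Samelson bimodules; the one-coset case $ys=ty$ via $s$-stability and $\Delta_y\otimes_RB_s\cong B_t\otimes_R\Delta_y$; Krull--Schmidt for (ii). However, the splitting in the two-coset case $y<z=ys$, which is the real content of the theorem, is not proved, and none of the reasons you give works.

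\begin{itemize}
\item The two ends of a layer sequence are supported on the same coset graph, so there is no separation of supports.
\item Splitting after localization does not imply splitting. For example, $0\to R_s[-2]\to R\otimes_{R^s}R\to R\to0$ splits generically but not globally.
\item Untwisting by $R_{y^{-1}}$ to an extension of two objects of $\cB_I$ supported on $\Gr(W_I)$ proves nothing by itself, because such extensions need not split. If $J\subset R\otimes_\kk R$ is the ideal of $\Gr(e)$, then $(R\otimes_\kk R)/J^2$ is a nonsplit extension of $R$ by copies of $R[-2]$.
\item ``Exhibiting the map'' is not an argument.
\end{itemize}

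The layer sequence itself is also set up too loosely. Here $U_y^+$ and $U_z$ are not $s$-stable. You must pass through the $s$-stable subquotient $E=\Gamma_{U_{y,s}}M/\Gamma_{U_{y,s}^+}M$ of $M$ (your $B$), which sits in $0\to\Dy{z}{N_z}\to E\to\Dy{y}{N_y}\to0$ with $N_y,N_z\in\cB_I$ from the induction. Then use $\Theta_sE\cong\Gamma_{U_{y,s}}(\Theta_sM)/\Gamma_{U^+_{y,s}}(\Theta_sM)$ (Lemma~\ref{lem:support-translation}) and cut $\Theta_sE$ along $W_Iz$. This yields
\[
 0\to\Dy{y}{N_z}\to\GY{y}(\Theta_sM)\to\Dy{y}{N_y}[1]\to0,\qquad
 0\to\Dy{z}{N_z}[-1]\to\GY{z}(\Theta_sM)\to\Dy{z}{N_y}\to0.
\]
So in the upper layer the $s$-twisted piece is the quotient, not the sub as in your formula.

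The missing idea is standard-exactness combined with Soergel's Hom formula. Since $\ch_\Delta(\Theta_sX)=\ch_\Delta(X)C_s$ on $\FD(W)$, the sequence $0\to\Theta_s\Dy{z}{N_z}\to\Theta_sE\to\Theta_s\Dy{y}{N_y}\to0$ is standard-exact. Applying $\Gamma_{W_Iz}$ (with $W_Iz$ upward closed in $W_Iy\sqcup W_Iz$) is exact on standard-exact sequences by a graded dimension count, so both displayed sequences are standard-exact. After tensoring with $R_{y^{-1}}[\len(y)]$, resp.\ $R_{z^{-1}}[\len(z)]$, they remain standard-exact. Their subs $N_z$, resp.\ $N_z[-1]$, lie in $\cB_I\subseteq\cB(W)$, so both split by Lemma~\ref{lem:hom-split}.

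With this step supplied, your route is correct and genuinely different from the paper's. The paper first splits the cross-coset extension $E$ over $(R,R^s)$ (Proposition~\ref{prop:adjacent}). It does this with an auxiliary right action of $\lambda\in(V^*)^{W_I}$ that identifies $R_{ar}$ with $R_a$ and turns $E$ into an object of $\FD(W_I)$. Only then does it apply $\Theta_s$, obtaining the stronger decomposition $\Theta_sE\cong\Theta_s\Dy{y}{N_y}\oplus\Theta_s\Dy{z}{N_z}$. Applying $\Theta_s$ first instead places each layer extension on a single coset, where untwisting and Lemma~\ref{lem:hom-split} suffice. So for the theorem itself, Proposition~\ref{prop:adjacent} can be bypassed.
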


Our argument is completely based on the theory of Soergel bimodules, so it applies to any Coxeter system $(W,S)$ and every parabolic subgroup in $W$. The proof begins with the trivial bimodule $R$, which is easy to check. We continue by right tensoring with $B_s$ for some $s\in S$. A double coset $W_IyW_{\{s\}}$ contains either one or two $W_I$-cosets. In the first case, we can compute the parabolic subquotients directly. The latter case is more complicated: there are two factors which form a nontrivial extension by \cite{Soe07}. Proposition \ref{prop:adjacent} shows that this extension splits after restricting to $R\otimes_\kk R^s$. Thus we can consider these two factors separately. By induction, we finally prove that Theorem \ref{thm:main} holds for all Bott-Samelson bimodules, and thus for $\cB(W)$. The proof is given in Section \ref{sec:induction}.

\subsection{Positivity and some applications}

By applying Elias and Williamson's result in \cite{EW14}, we can deduce that
\begin{corollary}\label{cor:positive}
Write
\begin{equation}\label{eq:positive}
 C_w=\sum_{x\in W_I,\ y\in\Wmin}p^I_{x,y,w}(v)C_xH_y.
\end{equation}
Then
\begin{enumerate}
\item $p^I_{x,y,w}(v)=\sum_n m^w_{x,y,n}v^n\in\bbN[v,v^{-1}]$;
\item $p^I_{x,y,w}=0$ unless $xy\leq w$, and $p^I_{x,y,w}=1$ if $xy=w$;
\item $p^I_{x,y,w}\in v\bbN[v]$ if $xy<w$.
\end{enumerate}
\end{corollary}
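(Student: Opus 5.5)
The plan is to pass to characters using Soergel's character map $\ch$ and then invoke the theorem of Elias and Williamson that $\ch(B_w)=C_w$. For a bimodule $M$ with a standard filtration, $\ch(M)=\sum_{u\in W}\bar h_u(M)H_u$, where $h_u(M)$ is the graded rank of the $\Delta_u$-subquotient. Refining the parabolic filtration by the standard filtration of each layer $\GY{y}(M)$ should give
\[
\ch(M)=\sum_{y\in\Wmin}\ch\bigl(\GY{y}(M)\bigr).
\]

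First I compute $\ch\bigl(\Dy{y}{N}\bigr)$ for $N\in\cB_I$. By definition of $\cB_I$ and the Elias--Williamson theorem applied to $W_I$, the standard filtration of $N$ lives on $\Gr(W_I)$, and $\ch(N)=\sum_{x\in W_I}a_xH_x$. Since $\len(xy)=\len(x)+\len(y)$, we have $R_x\otimes_R R_y\cong R_{xy}$, and the degree shifts add correctly, so $\Delta_x\otimes_R\Delta_y\cong\Delta_{xy}$. Tensoring the standard filtration of $N$ on the right with $\Delta_y$ then gives a standard filtration of $\Dy{y}{N}$, and hence
\[
\ch\bigl(\Dy{y}{N}\bigr)=\ch(N)H_y .
\]
Next, by Elias--Williamson for the Coxeter system $(W_I,I)$, which I assume is compatible with our representation $V$ restricted to $W_I$, we get $\ch(B^I_x)=C_x$. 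This yields $\ch\bigl((B_x^I\otimes_R\Delta_y)[n]\bigr)=v^{\pm n}C_xH_y$. I will fix the grading convention so that the sign is $v^n$, matching (i).

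Combining these with Theorem \ref{thm:main}(ii) gives
\[
C_w=\sum_{x,y}\Bigl(\sum_n m^w_{x,y,n}v^n\Bigr)C_xH_y .
\]
The hybrid elements form a basis, so comparing coefficients proves (i).

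For (ii) and (iii), I use the triangularity of the hybrid basis against the standard basis: $C_xH_y\in H_{xy}+\sum_{u<xy}\bbZ[v]vH_u$, since $C_x\in H_x+\sum_{x'<x}v\bbZ[v]H_{x'}$ and $x'y<xy$. Also $C_w\in H_w+\sum_{u<w}v\bbZ[v]H_u$. Inverting a unitriangular matrix whose off-diagonal entries lie in $v\bbZ[v]$ gives $p^I_{x,y,w}=\delta_{xy,w}$ plus an element of $v\bbZ[v]$, and shows it vanishes unless $xy\le w$. Intersecting with $\bbN[v,v^{-1}]$ from (i) gives (iii).

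The main obstacle is checking the grading conventions, so that $\Delta_x\otimes\Delta_y\cong\Delta_{xy}$ holds with the correct shift and the sign of $n$ in (i) comes out right. A second point to check is the identification of $B_x^I$, taken over $R$ for $W$, with the Soergel bimodule of $W_I$, which requires Elias--Williamson for $W_I$ acting on $V$.
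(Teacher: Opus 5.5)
Your proposal is correct and follows the paper's route: Elias--Williamson together with Lemma~\ref{lem:coset-character} and Theorem~\ref{thm:main}(ii) give (i), and unitriangularity of both bases against $\{H_u\}$ gives (ii)--(iii), where your inversion argument is more explicit than the paper's sketch. Both of your flagged worries go away. First, $B_x^I\cong B_x$ in $\cB(W)$, because $\cB_I$ is a full, summand-closed subcategory containing $BS(\underline{x})$ for a reduced word of $x$ in $I$; so Elias--Williamson for $W$ alone gives $\ch_\Delta(B_x^I)=C_x$. Second, with the paper's $\ch_\Delta$ (no bar) one has $\ch_\Delta(M[n])=v^n\ch_\Delta(M)$ directly from the definition, so there is no convention left to choose.
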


Using Corollary \ref{cor:positive}, we can deduce the positivity of some other polynomials. For example, we have the positivity of a factorization of matrix of Kazhdan-Lusztig polynomials (see \cite{BMS26}), and of the parabolic Kazhdan-Lusztig polynomials (see \cite{GH07}).

Here is the organization of the paper. Section \ref{sec:preliminary} provides some preliminaries for Hecke algebra and Soergel bimodules. In Section \ref{sec:setup}, we describe objects that admit Soergel's standard filtration in another way. This allows us to define the parabolic filtration. Moreover, we give some splitting criterions for some short exact sequences in Section \ref{sec:setup}, \ref{sec:rank-one} and \ref{sec:splitting}. Based on these conclusions, we prove the main theorem in Section \ref{sec:induction}. Finally, we prove the positivity for the hybrid basis and give some applications in Section \ref{sec:characters}.

\subsection*{AI disclosure}
Before using GPT6, we had worked on this problem for some time. Although the basic idea had already been formed and some progress had been made, certain details remained unresolved. GPT6 helped us overcome these issues, and we have reviewed and verified GPT6’s arguments.

\section{Soergel bimodules}\label{sec:preliminary}
In this section, we recall some basic contents of Hecke algebra and Soergel bimodules. We recommend \cite{EMTW20} for more information.

\subsection{Hecke algebra}

Let $(W,S)$ be a Coxeter system. The Hecke algebra $\cH=\cH(W)$ is an algebra over $\Acoef$. As a free $\Acoef$ module, $\cH$ admits the standard basis $\{T_w:w\in W\}$ satisfying the following relations

$$T_wT_u=T_{wu}\quad if\;\len(wu)=\len(w)+\len(u);$$
$$T_s^2=(v^{-2}-1)T_s+v^{-2}\quad for\;s\in S.$$

Write $H_w=v^{\len(w)}T_w$. There is an automorphism of $\cH$ called the bar involution $\overline{\cdot}$, which is the linear extension of $\overline{v}=v^{-1}$ and $\overline{T_w}=T_{w^{-1}}^{-1}$. Kazhdan and Lusztig \cite{KL79} proved that for each $w\in W$, there is a (unique) bar-invariant element $C_w\in H$ such that
$$C_w=H_w+\sum_{y<w}P_{y,w}(v)H_y,$$
where $P_{y,w}(v)\in v\bbZ[v]$. The set $\{C_w:w\in W\}$ forms a basis of $\cH$ called the Kazhdan-Lusztig basis, and we call the polynomials $P_{y,w}(v)$ Kazhdan-Lusztig polynomials. Note that $C_w$ is denoted by $C_w'$ in \cite{KL79}.

\subsection{Soergel's categorification theorem}

Now we recall some background on Soergel bimodules. Suppose that $V$ is a reflection-faithful representation of $W$. Let $R=\kk[V^*]$ denote the polynomial ring on $V$, we can view $R$ as a $\bbZ$-graded ring with elements in $V^*$ lying in degree 2. Let $\mathcal{R}$ be the category of finitely generated $\bbZ$-graded $R$-bimodules, whose Hom space $\Homg_\mathcal{R}(M,N)$ consists of degree 0 $R$-bimodule homomorphisms for $M,N\in \mathcal{R}$.

The category $\mathcal{R}$ has a monoidal structure with tensor product $\otimes_R$ and identity $R$. We often write $MN$ for the tensor product of $M,N\in \mathcal{R}$. Define the dual of $M\in \mathcal{R}$ by $\mathbb{D}M=\Homg_{-R}(M,R)$, the set of right $R$-module homomorphisms from $M$ to $R$ of all degrees. We can view $\mathbb{D}M$ as an $R$-bimodule via $(rfr')(m)=f(rmr')$, where $r,r'\in R$, $f\in\mathbb{D}M$ and $m\in M$. Note that $\mathbb{D}M$ has obvious $\bbZ$-grading.

Let $[\mathcal{R}]$ be the split Grothendieck group of $\mathcal{R}$, the class of $M$ in $[\mathcal{R}]$ is denoted by $[M]$. The formula $[M]\cdot[N]=[MN]$ makes $[\mathcal{R}]$ a ring. $[\mathcal{R}]$ is a free $\Acoef$-module with $v$ acting via $v[M]=[M[1]]$.

The right action of $W$ on $R$ is written as an exponential natation: $f^u(\xi)=f(u\xi)$. Given a simple reflection $s\in S$, let $R^s$ denote $s$-invariants of $R$. Set $B_s:=R\otimes_{R^s}R[1]$, it is a graded $R$-bimodule with $R$ acting via left and right multiplication. For any expression $\underline{w}=s_{i_1}s_{i_2}\cdots s_{i_l}$, we denote
$$BS(\underline{w})=B_{s_{i_1}}B_{s_{i_2}}\cdots B_{s_{i_l}},$$
which is called the Bott-Samelson bimodule. Clearly, $BS(\underline{w})\cong R\otimes_{R^{s_{i_1}}}R\otimes_{R^{s_{i_2}}}\cdots\otimes_{R^{s_{i_l}}}R[l]$.

\begin{definition}
    The category of \textbf{Soergel bimodules}, denoted by $\cB$, is the smallest full subcategory of $\mathcal{R}$ which contains $R$ and $B_s,s\in S$, and is closed under direct sums, direct summands, tensor product over $R$, and grading shifts.
\end{definition}

In particular, Bott-Samelson bimodules are Soergel bimodules. By \cite[Satz 6.16]{Soe07}, for a reduced expression $\underline{w}$ of $w\in W$, there is a unique (up to isomorphism) indecomposable direct summand $B_w$ of $BS(\underline{w})$ which does not occur as a direct summand of $BS(\underline{y})$ for any shorter expression $\underline{y}$. These bimodules $B_w$ (and their shifts) constitute the set of isomorphism classes of indecomposables in $\cB$. Moreover, $B_w$ is self-dual for each $w\in W$.

Write $[\cB]$ for the split Grothendieck group of $\cB$. We have Soergel's categorification theorem \cite[Theorem 1.10]{Soe07}.
\begin{theorem}\label{cat}
    There is an isomorphism of $\Acoef$-algebras
    $$\varepsilon: H\xrightarrow[]{\sim}[\cB]$$
    such that $\varepsilon(v)=[R(1)]$ and $\varepsilon(T_s+1)=[R\otimes_{R^s}R]$ for $s\in S$.
\end{theorem}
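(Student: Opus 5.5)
The plan is to build the map in the opposite direction, a \emph{character map} $\ch:[\cB]\to\cH$, show it is a bijective ring homomorphism, and then put $\varepsilon=\ch^{-1}$. Throughout I use $C_s=H_s+v$, so that $T_s+1=v^{-1}C_s$ and $R\otimes_{R^s}R=B_s[-1]$. The normalizations $\varepsilon(v)=[R(1)]$ and $\varepsilon(T_s+1)=[R\otimes_{R^s}R]$ then become $\ch([B_s])=C_s$ and $\ch([M[1]])=v\,\ch([M])$.

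\step{Step 1: the character map.} For a Soergel bimodule $M$, I use the support filtration by the submodules $\Gamma_{\geq x}M$, ordered compatibly with the Bruhat order. The key input is that each subquotient $\Gamma_{\geq x}M/\Gamma_{>x}M$ is a finite direct sum of shifts of $R_x$; this is Soergel's standard filtration. I would prove it by induction over Bott--Samelson bimodules, using that direct summands of filtered objects are again filtered. This is a support and localization argument, comparing $M$ with its localization at the generic point of each $\Gr(x)$.

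Given the filtration, I define
\[
\ch(M)=\sum_x h_x(M)\,H_x ,
\]
where $h_x(M)\in\Acoef$ is the graded rank of the $x$-th subquotient, normalized by $\Delta_x=R_x[-\len(x)]$. This is additive on split exact sums and satisfies $\ch(M[1])=v\,\ch(M)$, so it descends to a $\Acoef$-linear map on $[\cB]$.

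\step{Step 2: multiplicativity.} I would show $\ch(MB_s)=\ch(M)\,C_s$. It suffices to compute the filtration of $R_x\otimes_R B_s$, together with the fact that $-\otimes_R B_s$ carries the filtration of $M$ to a filtration whose pieces are unions over pairs $\{x,xs\}$. Concretely, $R_x\otimes_R B_s$ is an extension of $R_{xs}$ and $R_x$ with the appropriate shifts. This matches $H_xC_s=H_{xs}+v^{\pm1}H_x$, where the sign depends on whether $xs>x$.

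It follows that $\ch(BS(\underline w))=C_{s_{i_1}}\cdots C_{s_{i_l}}$. Since every object of $\cB$ is a direct summand of a sum of shifted Bott--Samelson bimodules, a short induction gives $\ch(MN)=\ch(M)\ch(N)$, so $\ch$ is a ring homomorphism.

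\step{Step 3: bijectivity.} Surjectivity: the products $C_{s_{i_1}}\cdots C_{s_{i_l}}$ over reduced expressions are unitriangular with respect to the basis $\{H_w\}$, so they span $\cH$.

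Injectivity uses the classification of indecomposables quoted above (\cite[Satz 6.16]{Soe07}). The classes $[B_w[n]]$ form a $\bbZ$-basis of $[\cB]$, and $\ch(B_w)\in H_w+\sum_{y<w}\Acoef H_y$, because $B_w$ is supported on $\Gr(\leq w)$ with rank-one top stalk. So $\ch$ sends a $\Acoef$-basis of $[\cB]$ to a unitriangular family in $\cH$, and is therefore an isomorphism.

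Setting $\varepsilon=\ch^{-1}$ gives an algebra isomorphism with the stated values on $v$ and on $T_s+1$.

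\step{Main obstacle.} The hard part is Step 1 together with the top-term claim in Step 3. Showing that subquotients of Bott--Samelson bimodules are genuinely free over $R_x$, and not merely supported on $\Gr(x)$, needs reflection-faithfulness of $V$ and Soergel's localization argument. Identifying the leading coefficient of $\ch(B_w)$ also requires the support computation that singles out $B_w$ inside $BS(\underline w)$. I would attack the freeness by first working over the localization $R_{(\alpha)}$ at each reflection $\alpha$, where $B_s$ splits, and then gluing via the resulting exact sequences.
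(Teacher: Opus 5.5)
The paper does not prove this theorem; it quotes it from Soergel. Your overall architecture (a character map from $\Delta$-flags, the identity $\ch(MB_s)=\ch(M)C_s$, and bijectivity from the classification of indecomposables) is Soergel's. However, Step~2, on which your Step~1 also rests, has a genuine gap.

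It is not enough to know the flag of $R_x\otimes_R B_s$ and that $\Theta_s$ respects the filtration by $s$-stable upper sets. The subquotient $E$ of $M$ on a pair $\{x,xs\}$ with $x<xs$ is an extension $0\to P\to E\to Q\to 0$, where $P$ is a sum of shifts of $\Delta_{xs}$ and $Q$ a sum of shifts of $\Delta_x$. This extension is in general non-split, already for $M=B_s$ and $x=e$.

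Applying the exact functor $\Theta_s$ then only yields a four-step filtration of $\Theta_sE$ with labels $xs,x,xs,x$, in that order from the bottom. Its middle layer is an extension with sub a sum of $\Delta_x$'s and quotient a sum of $\Delta_{xs}$'s. Such extensions need not split; compare $0\to R[-1]\to B_s\to R_s[1]\to 0$. So you cannot rearrange this filtration into the $\Gamma$-filtration, and neither $\Theta_s(\FD)\subset\FD$ nor the character formula follows.

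The missing ingredient is the splitting lemma (Lemma~\ref{lem:graph-ext}): every extension between sums of shifts of $R_u$ and $R_{us}$ splits once the right action is restricted to $R^s$. Since $\Theta_sN=(\Res_{R\text{-}R^s}N\otimes_{R^s}R)[1]$ depends only on that restriction, this gives $\Theta_sE\cong\Theta_sP\oplus\Theta_sQ$. Only then can you read off the flag from the computation of $R_u\otimes_R B_s$. Together with $\Theta_s\Gamma_U\cong\Gamma_U\Theta_s$ for $Us=U$ (Lemma~\ref{lem:support-translation}), this is the whole content of your Steps~1--2. It is exactly the mechanism the paper upgrades to the parabolic setting in Propositions~\ref{prop:adjacent} and~\ref{prop:stable}.

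The localization strategy in your last paragraph does not replace this lemma. The relevant extension classes are supported on $\Gr(x)\cap\Gr(xs)$ and vanish at the generic points of the graphs. Localizing there computes generic ranks, but cannot by itself show that the $\Gamma$-subquotients are free.

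A smaller point concerns your ``short induction'' for $\ch(MN)=\ch(M)\ch(N)$. It does not follow from the bare fact that every object is a summand of shifted Bott--Samelson bimodules, because you cannot subtract the contribution of an uncontrolled complement. There are two ways to fix this:
\begin{enumerate}
\item Induct on $\ell(w)$, using $BS(\underline w)\cong B_w\oplus\bigoplus B_y[n]$ with $\ell(y)<\ell(w)$. This follows from the classification, the support bound, and the multiplicity one of $\Delta_w$ in $BS(\underline w)$.
\item Prove injectivity (Step~3) first. Then the classes $[BS(\underline w)]$ span $[\cB]$, and multiplicativity follows formally from bilinearity and $\ch(MB_s)=\ch(M)C_s$.
\end{enumerate}
With these repairs, the rest is fine: the normalizations $\ch(B_s)=C_s$ and $\ch(M[1])=v\,\ch(M)$, surjectivity via unitriangularity, and injectivity via the $\bbZ$-basis $\{[B_w[n]]\}$ with $\ch(B_w)\in H_w+\sum_{y<w}\Acoef H_y$.
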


\subsection{Standard bimodules}

There remain some important objects in $\mathcal{R}$ called standard bimodules. We first recall Soergel's construction. For $w\in W$ consider the "twisted" graph
$$\Gr(w)=\{(w\xi,\xi):\xi\in V\}\subset V\times V.$$

View it as a subvariety of $V\times V$. For any subset $A$ of $W$ we define
$$\Gr(A)=\bigcup_{w\in A}\Gr(w)\subset V\times V.$$

We can identify $R\otimes_\kk R$ with the ring of regular functions on $V\times V$. Then $R(A):=R(\Gr(A))$, the ring of regular functions on $\Gr(A)$, is naturally a graded $R$-bimodule and is finitely generated as both left and right $R$-module. When $A=\{x\}$, we write $R_x=R(A)$. One can check that $R_x\cong R$ as left modules, and the right action on $R_x$ is twisted by $x^{-1}$: $m.r=m(r^{x^{-1}})$ for $m\in R_x$ and $r\in R$. We call $R_x\in \mathcal{R}$ the \textbf{standard bimodule}. Write
\[
\Delta_x=R_x[-\len (x)].
\]

Let $\mathcal{R}_{std}$ be the smallest strictly full subcategory of $\mathcal{R}$ which contains $R_x$ for all $x\in W$ and is closed under direct sums and grading shifts. From the definition, we have $R_x\otimes_R R_y\cong R_{xy}$, so $\mathcal{R}_{std}$ is monoidal. Moreover, it categorifies the group algebra $\Acoef[W]$.

The most important application of standard bimodules is constructing the "standard filtration" of some $R$-bimodules. For $M\in\mathcal{R}$, the support of $M$ is $\Supp(M)=\{\mathfrak{p}\in\Spec(R\otimes_\kk R):M_{\mathfrak{p}}\neq0\}$. For $A\subset W$, consider the following subbimodule
\[
\Gamma_A M=\{m\in M:\Supp (m)\subset \Gr(A)\}.
\]
Put $\Gamma_{\geq i} M=\Gamma_{\{x\in W:\len(x)\geq i\}} M$. We say that $M$ has a standard filtration if

\begin{enumerate}
    \item $\Supp(M)\subset \Gr(A)$ for some finite $A\subset W$;

    \item For all $i\in\bbZ$, the subquotient $\Gamma_{\geq i} M/\Gamma_{>i} M$ is isomorphic to a finite direct sum of $R_x[n]$ with $\len(x)=i$ and $n\in\bbZ$.
\end{enumerate}

Denote by $\FD(W)$ the full subcategory of $\mathcal{R}$ consisting of objects which admit a standard filtration (see \cite[Section 5]{Soe07}). In particular, $\cB\subset \FD(W)$. Let $(M:\Delta_x[n])$ be the multiplicity of $\Delta_x[n]$ in $\Gamma_{\geq \len(x)} M/\Gamma_{>\len(x)} M$. We define

\begin{align}
    \ch_\Delta:\FD(W)&\to\cH\\
    M&\mapsto \sum_{x\in W,n\in\bbZ}(M:\Delta_x[n])v^nH_x.
\end{align}

By \cite[Theorem 5.3]{Soe07}, $\ch_\Delta$ defines a left inverse of $\varepsilon$ in Theorem \ref{cat}. Soergel conjectured that $ch_\Delta([B_x])=C_x$ for all $x\in W$. It is equivalent to saying $\End(B_x)=\kk$. Elias and Williamson \cite{EW14} proved this conjecture by the Hodge theory of Soergel bimodules. This indicates the positivity of Kazhdan-Lusztig polynomials. Their work is a significant progress in the theory of Soergel bimodules. 

\section{Ordered filtration and parabolic filtration}\label{sec:setup}

Recall that there exists a real reflection-faithful representation $V$ constructed in \cite[Proposition 2.1]{Soe07}, and denote $R=\kk[V^*]$. For any subgroup $W_I$ of $W$, denote by $R^{W_I}$ the $W_I$ invariants in $R$. 

\begin{lemma}\label{lem:separating}
For every reflection $r\notin W_I$, there is a linear form
$\lambda\in(V^*)^{W_I}$ such that $\lambda^r\neq\lambda$.
\end{lemma}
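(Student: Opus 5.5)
The plan is to reduce the statement to a fact about coroot supports, and then prove that fact by induction on height. I use the explicit shape of Soergel's representation from \cite[Proposition 2.1]{Soe07}. There one has simple roots $\alpha_s\in V^*$ and simple coroots $\alpha_s^\vee\in V$ for $s\in S$ such that:
\begin{itemize}
\item the coroots $\{\alpha_s^\vee\}$ are linearly independent;
\item $s$ acts on $V^*$ by $\lambda^s=\lambda-\langle\lambda,\alpha_s^\vee\rangle\alpha_s$;
\item the pairing matrix $\langle\alpha_s,\alpha_t^\vee\rangle=-2\cos(\pi/m_{st})$ for $s\neq t$ (or $\le -2$ if $m_{st}=\infty$) is nonpositive off the diagonal and equals $2$ on it.
\end{itemize}
If the realization in \cite{Soe07} has the roles of roots and coroots interchanged, the argument below is run with the roles swapped; only independence on the relevant side and the sign condition are used.

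First I identify the invariants. For a group generated by $I$, a form $\lambda$ is $W_I$-invariant iff $\lambda^t=\lambda$ for all $t\in I$. Since $\alpha_t\neq0$, this holds iff $\langle\lambda,\alpha_t^\vee\rangle=0$ for all $t\in I$. Hence $(V^*)^{W_I}$ is the annihilator of $V_I^\vee:=\operatorname{span}\{\alpha_t^\vee:t\in I\}$.

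Next I rewrite the condition on $r$. Write $r=wsw^{-1}$ with $s\in S$, so that $\lambda^r=\lambda-\langle\lambda,\beta^\vee\rangle\beta$ with $\beta=w\alpha_s$ and $\beta^\vee=w\alpha_s^\vee$. Here $\beta$ is nonzero, and $\beta^\vee$ is a root vector in the coroot system, a nonnegative or nonpositive combination of the simple coroots. Thus $\lambda^r\neq\lambda$ iff $\langle\lambda,\beta^\vee\rangle\neq0$. By linear algebra, such a $\lambda$ in the annihilator of $V_I^\vee$ exists iff $\beta^\vee\notin V_I^\vee$. So the lemma reduces to the following claim: \emph{if a coroot $\beta^\vee$ lies in $V_I^\vee$, then the corresponding reflection $r$ lies in $W_I$.}

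I prove the claim by induction on the height of $\beta^\vee$, assumed positive without loss of generality. By linear independence of the simple coroots, the height is the sum of the coefficients in the unique expansion $\beta^\vee=\sum_{t\in I}c_t\alpha_t^\vee$, with $c_t\ge0$.
\begin{itemize}
\item If $\beta^\vee$ is simple, then $\beta^\vee=\alpha_t^\vee$ for some $t$. Since $t\mapsto\alpha_t^\vee$ is injective on reflections, $r=t$, and $t\in I$ by independence.
\item Otherwise, the standard argument (if $\langle\alpha_s,\beta^\vee\rangle\le0$ for all $s$, then $\beta^\vee$ would pair nonpositively with a positive combination of itself, contradicting positivity of the invariant form) gives some $s\in S$ with $\langle\alpha_s,\beta^\vee\rangle>0$.
\end{itemize}
The key point is that any such $s$ lies in $I$. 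Indeed, if $s\notin I$, then $\langle\alpha_s,\beta^\vee\rangle=\sum_{t\in I}c_t\langle\alpha_s,\alpha_t^\vee\rangle\le0$ by the sign condition, a contradiction. Then $s(\beta^\vee)=\beta^\vee-\langle\alpha_s,\beta^\vee\rangle\alpha_s^\vee$ is a positive coroot of smaller height, and it still lies in $V_I^\vee$. It corresponds to the reflection $srs$. By induction $srs\in W_I$, hence $r\in W_I$.

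The main obstacle I expect is pinning down the conventions of \cite[Proposition 2.1]{Soe07}. Specifically, I need independence of the simple coroots (rather than only reflection-faithfulness) and the existence of a descent $s$ reducing height in the non-simple case. The latter is most cleanly obtained from Deodhar's/Bourbaki's result that positive roots decrease in height under some simple reflection. If the chosen $V$ is larger than the geometric representation, I would pass to the span of the simple coroots, which is $W$-stable, and argue there.
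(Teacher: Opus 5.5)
Your reduction is sound and isolates exactly the fact the paper relies on. The paper takes a single $\lambda$ with $\lambda(\alpha_t^\vee)=0$ for $t\in I$ and $\lambda(\alpha_s^\vee)>0$ for $s\notin I$. It then simply asserts that the positive coroot of a reflection $r\notin W_I$ has a positive coefficient at some $\alpha_s^\vee$ with $s\notin I$, which is the contrapositive of your claim. Your annihilator argument gives an $r$-dependent $\lambda$ rather than the paper's uniform one. That is all the lemma, and its use in Proposition~\ref{prop:adjacent}, needs. So the real difference is that you set out to prove the support fact the paper takes for granted.

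That proof has one genuine gap: the induction on height is not well founded for a general Coxeter group. Coefficients of coroots are real numbers; already $s(\alpha_t^\vee)=\alpha_t^\vee+2\cos(\pi/5)\,\alpha_s^\vee$ when $m_{st}=5$. An infinite $W$ has infinitely many positive coroots, so a strictly decreasing sequence of real heights need not terminate without further input.

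The repair is to keep your choice of $s$ but induct on $\ell(r)$ instead.
\begin{enumerate}
\item On the span of the coroots, $r$ acts as the orthogonal reflection in $\beta^\vee$ for the invariant form. Hence $r(\alpha_s^\vee)=\alpha_s^\vee-\langle\alpha_s,\beta^\vee\rangle\beta^\vee$.
\item Since $\beta^\vee\neq\alpha_s^\vee$ is positive, it has a positive coefficient at some $\alpha_t^\vee$ with $t\neq s$. So $r(\alpha_s^\vee)$ is a negative coroot different from $-\alpha_s^\vee$, which gives $\ell(sr)<\ell(r)$.
\item Then $sr(\alpha_s^\vee)=s\bigl(r(\alpha_s^\vee)\bigr)$ is still negative, so $\ell(srs)=\ell(r)-2$.
\end{enumerate}
Equivalently, you can induct on the depth of $\beta^\vee$, as in \cite[Section 4.6]{BB05}. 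With this change the rest of your argument goes through. In the base case, the precise fact you need is that assigning a positive coroot to a reflection is well defined and injective.
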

\begin{proof}
The linearly independence of the simple coroots allows us to choose $\lambda$ with
\[
 \lambda(\alpha_t^\vee)=0\quad for\;t\in I,\qquad
 \lambda(\alpha_s^\vee)>0\quad for\;s\in S\setminus I.
\]
The first condition shows that $\lambda$ is $W_I$-invariant. For any reflection $r\notin W_I$, let $\alpha_r^\vee$ be the corresponding positive coroot. When we write $\alpha_r^\vee$ as a sum of simple coroots, it has positive coefficient for some $\alpha_s^\vee$, $s\in S\setminus I$. Therefore $\lambda(\alpha_r^\vee)>0$, moreover, $\lambda^r\neq \lambda$.
\end{proof}

\subsection{Ordered filtrations}\label{sec:flags}

Put $D=R\otimes_\kk R$. Any $M\in\mathcal{R}$ is also a $D$-module. Let $A$ be the minimal (finite) subset of $W$ such that $\Supp(M)\subset\Gr(A)$, then
\begin{equation}\label{eq:finite-support}
 \Gamma_U M=\Gamma_{U\cap A}M \qquad for\;any\;U\subseteq W.
\end{equation}

Given a poset $A\subset W$, we can rearrange it such that elements bigger in Bruhat order appear earlier. We call such a list the \textbf{decreasing linear extension} of $A$. For example, $\{s_1s_2s_1,s_1s_2,s_2s_1,s_1,s_2,e\}$ is a decreasing linear extension of $\mathcal{S}_3$. The following lemma provides an equivalent description of objects in $\FD(W)$. For a proof, one can see \cite[Lemma 5.8 and Lemma 6.3]{Soe07}.

\begin{lemma}\label{lem:ordered-flags}
Let $A\subset W$ be finite and let $\{w_1,\ldots,w_m\}$ be a decreasing linear extension of $A$.
\begin{enumerate}
\item If $M\in\FD(W)$ satisfies $\Supp(M)\subset\Gr(A)$, then
\[
 M_j=\Gamma_{\{w_1,\ldots,w_j\}}M\qquad(1\leq j\leq m)
\]
is a filtration, and
\[
 M_j/M_{j-1}\cong
 \bigoplus_n\Delta_{w_j}[n]^{\oplus(M:\Delta_{w_j}[n])}.
\]
\item Conversely, suppose that $0=M_0\subset M_1\subset\cdots\subset
M_m=M$ is a filtration and each $M_j/M_{j-1}$ is a direct sum of shifts
of $\Delta_{w_j}$. Then $M\in\FD(W)$.
\end{enumerate}
\end{lemma}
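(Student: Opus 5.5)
The proof of Lemma~\ref{lem:ordered-flags} reduces everything to Soergel's results on the sheaves $\Gamma$ and on extensions between standard bimodules.

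\textbf{Part (i).} Let $M\in\FD(W)$ with $\Supp(M)\subset\Gr(A)$. First, the $M_j=\Gamma_{\{w_1,\dots,w_j\}}M$ form an increasing chain of subbimodules with $M_m=M$, directly from the definition of $\Gamma$. The key input, from \cite[Lemma 5.8]{Soe07}, is that on $\FD(W)$ the functor $\Gamma_U$ is well behaved for every \emph{upward closed} $U\subseteq W$. By this we mean two things: $\Gamma_U$ is exact on short exact sequences in $\FD(W)$, and for $U'\subset U$ both upward closed with $U\setminus U'=\{w\}$, the quotient $\Gamma_UM/\Gamma_{U'}M$ is a direct sum of shifts of $\Delta_w$ with multiplicities $(M:\Delta_w[n])$.

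The sets $\{w_1,\dots,w_j\}$ need not themselves be upward closed in $W$. They are, however, upward closed inside $A$, because the list is a decreasing linear extension. Set $U_j=\{w_1,\dots,w_j\}\cup\{z\in W: z\not\le a \text{ for all } a\in A\}$. This $U_j$ is upward closed in $W$, and \eqref{eq:finite-support} gives $\Gamma_{U_j}M=\Gamma_{U_j\cap A}M=M_j$. Applying Soergel's lemma to the pair $U_{j-1}\subset U_j$, whose difference is $\{w_j\}$, yields the stated form of $M_j/M_{j-1}$.

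The only step that needs real care is the multiplicity identification. To pin down the multiplicities as $(M:\Delta_{w_j}[n])$, I will compare with the length filtration. For this I refine both filtrations to $\Gamma_{U_j\cup\{\ell\ge i\}}$. Soergel's subquotient statement is independent of the chosen upward-closed chain, which is the content of \cite[Lemma 6.3]{Soe07}. Hence the $\Delta_{w_j}$-part of $\Gamma_{\ge i}M/\Gamma_{>i}M$ (taking $i=\ell(w_j)$) agrees with $M_j/M_{j-1}$.

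\textbf{Part (ii).} I will argue by induction on $m$. By the exactness statement above, $\FD(W)$ is closed under extensions of the form $0\to M_{m-1}\to M\to \bigoplus\Delta_{w_m}[n]\to 0$, provided the supports are arranged compatibly. Here $w_m$ is minimal among the support, so this is exactly Soergel's criterion: an object $M$ whose support lies in $\Gr(A)$, and which has a submodule $M'\in\FD$ with quotient a sum of $\Delta_w$ for $w$ minimal, satisfies $\Gamma_{\ge i}$-subquotients that are extensions of those of $M'$ and of the quotient. Such an extension splits because $\Gr(w)\cap\Gr(w')$ has codimension at least one, again by \cite[Lemma 6.3]{Soe07}. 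Together with the inductive hypothesis for $M_{m-1}$, this gives $M\in\FD(W)$.

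Since both parts are essentially citations, the write-up will mostly verify that the hypotheses of \cite{Soe07} are met.
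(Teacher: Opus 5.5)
Your plan (reduce (i) to Soergel's statements for upward closed sets, and prove (ii) by peeling off the minimal label $w_m$) is sensible; the paper itself only cites \cite{Soe07}, adding that standard bimodules with incomparable labels have no extensions, so the length filtration can be reordered into any decreasing linear extension. But several steps you rely on are false as written.

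First, $\Gamma_U$ is \emph{not} exact on arbitrary short exact sequences in $\FD(W)$. Take $U=\{x:x\geq s\}$ and $0\to R[-1]\to B_s\to R_s[1]\to0$. Then $\Gamma_UR=0$, and $\Gamma_UB_s=\Gamma_{\{s\}}B_s$ is generated by $\alpha_s\otimes1-1\otimes\alpha_s$. This generator maps to $2\alpha_s$, so the image is $\alpha_sR_s[1]\neq R_s[1]$. What is true is a statement about a single $M\in\FD(W)$: $\Gamma_UM$ and $M/\Gamma_UM$ lie in $\FD(W)$, with the multiplicities of $M$ split according to $U$.

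Second, your $U_j$ need not be upward closed. In $\mathcal S_3$ with $A=\{e,s_1s_2\}$, $w_1=s_1s_2$, $w_2=e$, you get $U_2=\{e,s_1s_2,s_2s_1,s_1s_2s_1\}$, which contains $e$ but not $s_1$. The elements lost are those lying above some $w_k$ and below an element of $A$ without being in $A$. One repair is to replace $A$ by its finite Bruhat lower closure $\bar A$ and extend the given order to a decreasing linear extension $\bar w_1,\bar w_2,\dots$ of $\bar A$. The extra steps give zero subquotients by \eqref{eq:finite-support}, and now $\{\bar w_1,\dots,\bar w_k\}\cup(W\setminus\bar A)$ is upward closed, with consecutive sets differing by one element. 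If your cited single-step statement already includes the multiplicities, the extra ``comparison with the length filtration'' is unnecessary. As written it merely asserts chain-independence, which is the point at issue.

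In (ii) you cannot invoke exactness ``in $\FD(W)$'': $M\in\FD(W)$ is what is being proved, and that exactness fails anyway. Your splitting reason is also wrong. $\Gr(w)\cap\Gr(w')$ has codimension $\geq1$ for every $w\neq w'$, and codimension exactly one is precisely where non-split extensions live (Lemma~\ref{lem:graph-ext}, or $B_s$ itself).

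The input you need comes from the Koszul resolution of $R_x$: $\operatorname{Ext}^1_D(R_x,R_y)=0$ unless $\Gr(x)\cap\Gr(y)$ has codimension one, i.e.\ $y=xt$ for a reflection $t$. Hence it vanishes when $x,y$ are incomparable, or distinct of equal length. Then argue as follows, with $w=w_m$ and $Q=M/M_{m-1}$.
\begin{enumerate}
\item For $i\leq\len(w)$, push the extension out along $M_{m-1}\to M_{m-1}/\Gamma_{\geq i}M_{m-1}$. The target is filtered by $R_x$ with $x\in A$ and $\len(x)<\len(w)$. Such $x$ is incomparable to $w$, because $w_m$, being last, is minimal in $A$. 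So the pushout splits and $\Gamma_{\geq i}M\to Q$ is onto.
\item Consequently the layer $\Gamma_{\geq\len(w)}M/\Gamma_{>\len(w)}M$ is an extension of $Q$ by a sum of $R_x$ with $\len(x)=\len(w)$, $x\neq w$. This splits by the same vanishing, and all other layers agree with those of $M_{m-1}$.
\end{enumerate}
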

Indeed, by \cite[Lemma 5.8]{Soe07}, if two subquotients $M_{j+1}/M_j$ and $M_j/M_{j-1}$ correspond to incomparable elements in $W$, there is no extension between them. Thus, swapping their order does not change any subquotient. In particular, we can arrange elements in $A$ to be a decreasing linear extension. We call the filtration in Lemma \ref{lem:ordered-flags} the \textbf{ordered filtration}.

\subsection{Subquotients in the parabolic filtration}\label{sec:grouping}

The projection $\pi_I:W\to\Wmin$ (defined in Section \ref{sec:int_to_main}) is order-preserving for Bruhat order (see \cite[Section 2.5]{BB05}). Thus both $U_y$ and $U_y^+$ in \eqref{eq:inverse-set} are upper sets. Recall the subquotient $\GY{y}(M)$ in \eqref{eq:coset-layer}. Let $M\in\FD(W)$ and $A\subset W$ be minimal with $\Supp(M)\subset\Gr(A)$. Lemma~\ref{lem:ordered-flags}
allows us to arrange elements in $A$ in decreasing order, and then within each coset. It follows that $\GY{y}(M)\in\FD(W)$ and
\begin{equation}\label{eq:layer-multiplicities}
 (\GY{y}(M):\Delta_u[n])=
 \begin{cases}
 (M:\Delta_u[n]),&\pi_I(u)=y,\\
 0,&\pi_I(u)\neq y.
 \end{cases}
\end{equation}
More generally, this observation works for any order-preserving map for the same reason. We will apply this to the double coset $W_I\backslash W/W_{\{s\}}$ later.

Recall the definition of $\Dy{y}{N}$ in \eqref{eq:tensor-std-mod}. Let $I\subset S$, given $N\in\FD(W_I)$ and $y\in\Wmin$. $R_y$ is free as left $R$-module, the functor $-\otimes_R R_y$ is exact. Right tensoring with $R_y$ sends the subquotient of ordered filtration of $N$ with label $a\in W_I$ to one with label $ay$. The map $a\mapsto ay$ is order-preserving, Lemma~\ref{lem:ordered-flags}(ii) therefore shows that $\Dy{y}{N}\in\FD(W)$. We will compute its character explicitly in Lemma~\ref{lem:coset-character}.

A short exact sequence
\[
 0\longrightarrow A\longrightarrow E\longrightarrow C\longrightarrow0
\]
of objects in $\FD(W)$ is called \textbf{standard-exact} if
\begin{equation}\label{eq:standard-exact}
 (E:\Delta_u[n])=(A:\Delta_u[n])+(C:\Delta_u[n])
 \quad\text{for all }u,n.
\end{equation}
For example, given $M\in\FD(W)$, the sequence $0\longrightarrow\Gamma_{\geq i}M\longrightarrow M\longrightarrow M/\Gamma_{\geq i}M\longrightarrow0$ is a standard-exact sequence.

\subsection{The Hom splitting criterion}

For $M,N\in\mathcal{R}$, let $\Homg(M,N)$ be the set of $R$-bimodule homomorphisms from $M$ to $N$ of all degrees. It is naturally a graded $R$-bimodule.

Denote by $\grk M$ the graded rank of a graded free right $R$-module $M$. Soergel's Hom formula \cite[Theorem~5.15 and Lemma~6.13]{Soe07}
states that $\Homg(M,N)$ is graded free as right $R$-module for $M\in\FD(W)$ and $N\in\cB$. For fixed $N$, $\grk\Homg(M,N)$ is additive in the multiplicities $(M:\Delta_u[n])$. That is,
\begin{equation}\label{eq:hom-additivity}
 \grk\Homg(M,N)=\sum_{u\in W,\ n\in\bbZ}
 (M:\Delta_u[n])\,\grk\Homg(\Delta_u[n],N).
\end{equation}
This formulation avoids introducing the opposite $\nabla$-filtration, which is not needed here.

\Needspace{8\baselineskip}
\begin{lemma}\label{lem:hom-split}
Let $0\to A\xrightarrow{i}E\to C\to0$ be a standard-exact sequence in
$\FD(W)$. If $A\in\cB$, then the sequence splits in $\mathcal{R}$.
\end{lemma}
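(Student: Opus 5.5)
We want a retraction $E\to A$ of $i$, i.e.\ $\id_A$ in the image of $i^*:\Homg(E,A)\to\Homg(A,A)$. Apply $\Homg(-,A)$ to the sequence; since $A\in\cB$, the functor is left exact and gives
\[
0\longrightarrow \Homg(C,A)\longrightarrow \Homg(E,A)\xrightarrow{\;i^*\;}\Homg(A,A).
\]
It suffices to show $i^*$ is surjective, and we compare graded ranks.

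By Soergel's Hom formula, all three Hom spaces are graded free right $R$-modules, because $A,E,C\in\FD(W)$ and $A\in\cB$. Standard-exactness \eqref{eq:standard-exact} together with \eqref{eq:hom-additivity} gives
\[
\grk\Homg(E,A)=\grk\Homg(A,A)+\grk\Homg(C,A).
\]
Exactness on the left shows that $Q=\operatorname{im}(i^*)\cong\Homg(E,A)/\Homg(C,A)$. Hence $Q$ has the same graded dimension in each degree as the free module $\Homg(A,A)$, once we tensor over $R$ with $\kk$ or compare Hilbert series. Hilbert series are additive on short exact sequences of finitely generated graded modules, so $\Homg(E,A)$ and $\Homg(C,A)\oplus\Homg(A,A)$ have the same Hilbert series. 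Therefore $Q\subset \Homg(A,A)$ has the same Hilbert series as $\Homg(A,A)$.

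Everything is finite-dimensional in each degree, since the modules are finitely generated and graded over $R$ with $R$ positively graded and degrees bounded below. So a graded submodule with equal Hilbert series is the whole module, and $Q=\Homg(A,A)$. Choose $p\in\Homg(E,A)$ of degree $0$ with $p\circ i=\id_A$. Then $p$ is a degree-zero bimodule retraction, and the sequence splits in $\mathcal{R}$.

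The main point to check is that the Hom formula applies to $E$ and $C$ as well as to $A$. It does, since both lie in $\FD(W)$ and the target $A$ is in $\cB$. We also need left exactness of $\Homg(-,A)$ on bimodules, which is automatic.
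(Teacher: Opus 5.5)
Your proposal is correct and follows essentially the same route as the paper. Both apply the left-exact functor $\Homg(-,A)$, use standard-exactness with the Hom formula to get additivity of graded ranks, and count dimensions degree by degree to show $i^*$ is surjective, then lift $\id_A$ to a degree-zero retraction.
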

\begin{proof}
Apply $\Homg(-,A)$ to the sequence. Left exactness gives
\[
 0\longrightarrow\Homg(C,A)\longrightarrow\Homg(E,A)
 \xrightarrow{i^*}\Homg(A,A).
\]
All three modules are graded free. Standard-exactness \eqref{eq:standard-exact} and
\eqref{eq:hom-additivity} give
\[
 \grk\Homg(E,A)=\grk\Homg(C,A)+\grk\Homg(A,A).
\]
Each homogeneous component is finite dimensional. Left exactness of $\Homg$ identifies the kernel of $i^*$ with $\Homg(C,A)$, so in every degree $d$,
\[
 \dim_{\kk}\operatorname{im}(i^*)_d
 =\dim_{\kk}\Homg(E,A)_d-\dim_{\kk}\Homg(C,A)_d
 =\dim_{\kk}\Homg(A,A)_d.
\]
Thus $i^*$ is surjective in each homogeneous component. By lifting $\id_A\in\Homg(A,A)_0$ to $\Homg(E,A)_0$, we obtain a section of $i:A\to E$. Thus the sequence splits.
\end{proof}

The assumption of standard-exactness is essential. The lemma also holds for $A\in\cB_I$ and $E,C\in\FD(W_I)$.

\section{$R$-bimodules supported in $\Gr(W_IyW_{\{s\}})$}\label{sec:rank-one}

\subsection{The double coset $W_IyW_{\{s\}}$}

\begin{lemma}\label{lem:product-order}
Let $y\in\Wmin$ and $s\in S$.
\begin{enumerate}
\item Either $ys\in\Wmin$, or $ys=ty$ for some $t\in I$.
In the latter case $y<ys$ and $W_IyW_{\{s\}}=W_Iy$.
\item Suppose that $z=ys\in\Wmin$ and $y<z$. Then
\[
 W_IyW_{\{s\}}=W_Iy\sqcup W_Iz,\qquad r=ysy^{-1}\notin W_I.
\]
The map
\[
 W_I\times\{0,1\}\longrightarrow W_Iy\sqcup W_Iz,
 \qquad(a,\varepsilon)\longmapsto ays^\varepsilon
\]
is an isomorphism of posets, where the order of $W_I\times\{0,1\}$ is induced by that of $W_I$ and $0<1$.
\end{enumerate}
\end{lemma}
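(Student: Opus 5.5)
The plan is to rely on Deodhar's lemma and the standard facts about minimal coset representatives (as in \cite[Section 2.4–2.5]{BB05}), namely: $y\in\Wmin$ if and only if $\len(ty)>\len(y)$ for all $t\in I$, and every $w\in W$ factors uniquely as $w=x\cdot\pi_I(w)$ with lengths adding.

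For (i), first note that $y<ys$ always holds in this case. If $\len(ys)<\len(y)$, then $y$ has a reduced expression ending in $s$, and a prefix of a minimal representative is again minimal, so $ys\in\Wmin$. Now assume $ys\notin\Wmin$. Then $\len(ys)=\len(y)+1$: otherwise we are in the previous situation. Write $ys=x\cdot y'$ with $x\in W_I$, $x\neq e$, and $y'\in\Wmin$. Since $\len(ys)=\len(y)+1$ and $\len(y')\le\len(y)$ (because $y'\le\pi_I(ys)$, and multiplying by $s$ changes the length of the coset representative by at most one), the exchange condition applied to a reduced word for $x$ followed by a reduced word for $y'$ gives a precise outcome. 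Removing the letter $s$ from the end of $ys$ must delete exactly one letter of the word for $xy'$, and this yields $y=x'y'$ with $x'$ obtained from $x$ by deleting one letter. Minimality of $y$ forces $x'=e$, so $y'=y$ and $x=t$ is a single element of $I$ with $ys=ty$. Then $W_IyW_{\{s\}}=W_Iy\cup W_Iys=W_Iy$, and $y<ys$ by the length count. This is Deodhar's lemma, and I would cite it if a full re-derivation is unnecessary.

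For (ii), the double coset is $W_Iy\cup W_Iys=W_Iy\cup W_Iz$. The two cosets are distinct because $y\neq z$ are both minimal representatives, so the union is disjoint. If $r=ysy^{-1}$ lay in $W_I$, then $z=ys=ry\in W_Iy$, a contradiction; hence $r\notin W_I$. The map $(a,\varepsilon)\mapsto ays^\varepsilon$ is a bijection because $a\mapsto ay$ and $a\mapsto az$ are bijections from $W_I$ onto the respective cosets. Moreover, $\len(ays^\varepsilon)=\len(a)+\len(y)+\varepsilon$ since $z$ is minimal with $\len(z)=\len(y)+1$.

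The main obstacle is the order statement: we need $ay s^\varepsilon\le by s^{\delta}$ if and only if $a\le b$ and $\varepsilon\le\delta$. For the implication from right to left, I would use that $a\le b$ implies $ay\le by$ and $az\le bz$, because multiplying by a fixed minimal representative with lengths adding preserves Bruhat order (a subword argument on the concatenated reduced words). I would also use $by\le bz=bys$, since the length increases. For the converse, the first step is that $\pi_I$ is order-preserving, so $ays^\varepsilon\le bys^\delta$ forces $\pi_I$ of the left side to be at most $\pi_I$ of the right side. This excludes $\varepsilon=1,\delta=0$, because $z\not\le y$. It remains to recover $a\le b$. Here I would use the projection onto $W_I$-components for $u\le w$ within a fixed coset: taking a reduced word of $by s^\delta$ formed by a word of $b$ followed by a word of $ys^\delta$, a subword representing $ays^\varepsilon$ splits into a part in $W_I$ and a part $\le ys^\delta$. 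Then uniqueness of parabolic factorization, combined with the subword property of Bruhat order under the map $w\mapsto$ its $W_I$-part restricted to the relevant cosets, gives $a\le b$. This last step is where I expect care is needed. If the subword argument gets messy, I would instead invoke the known fact that for $y\le y'$ in $\Wmin$ and $a,b\in W_I$, $ay\le by'$ implies $a\le b$ whenever $y'$ covers $y$ via right multiplication by $s$ (a lifting-property argument using $s$ as a right descent of $bz$).
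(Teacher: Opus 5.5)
\textbf{Overall.} Most of the proposal is sound. This includes part (i), which is a variant of the paper's argument: you exchange $s$ on the right of a reduced word for $x\cdot y'$, where the paper exchanges $t$ on the left of $\underline{y}s$. Also sound are the coset decomposition, $r\notin W_I$, the bijection, the implication $a\le b,\ \varepsilon\le\delta\Rightarrow ays^\varepsilon\le bys^\delta$, and the exclusion of $(\varepsilon,\delta)=(1,0)$ via $\pi_I$. In (i) you should also treat the case where the exchange deletes a letter of the word for $y'$. Minimality of $y$ rules it out just as well, since then $y=x\cdot y''$ with $x\neq e$ and lengths adding. The bound $\ell(y')\le\ell(y)$ follows simply from $x\neq e$, and is not actually needed.

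\textbf{The gap.} It is the step you flag yourself: recovering $a\le b$ from $ays^\varepsilon\le bys^\delta$. Your subword sketch gives $ays^\varepsilon=b'w'$ with $b'\le b$ in $W_I$ and $w'\le ys^\delta$. But $w'$ need not lie in $\Wmin$, since subwords of minimal representatives are not minimal in general. So uniqueness of the parabolic factorization only yields $w'=c\cdot ys^\varepsilon$ and $a=b'c$ for some $c\in W_I$. You also cannot appeal to the map sending $w$ to its $W_I$-part, because it is not order-preserving: in $S_3$ with $I=\{s_1\}$ one has $s_1\le s_2s_1$, while the $W_I$-parts are $s_1$ and $e$.

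\textbf{How to close it.} The missing idea is a length count: $\ell(c)+\ell(y)+\varepsilon=\ell(w')\le\ell(y)+\delta$.
\begin{enumerate}
\item For $\varepsilon=\delta$ this forces $c=e$.
\item For $(\varepsilon,\delta)=(0,1)$ the only alternative is $c=t\in I$ with $ty\le ys$ of the same length. Then $ty=ys$, contradicting $z\in\Wmin$.
\end{enumerate}
Thus $c=e$ and $a=b'\le b$ in all cases.

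\textbf{Your fallback.} It is exactly the paper's route for the mixed case: $s$ is a right descent of $bz$ but not of $ay$, so the lifting property turns $ay\le bz$ into $ay\le by$. But it only reduces matters to the same-coset implication $ay\le by\Rightarrow a\le b$. The paper simply asserts this, and it still needs the length argument above. Alternatively, use an induction on $\ell(y)$ via Deodhar's property Z, peeling off a common right descent of $ay$ and $by$.
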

\begin{proof}
(i) Suppose that $ys\notin\Wmin$, and choose $t\in I$ with $tys<ys$.
If $ys<y$, then
\[
 \len(ty)\leq\len(tys)+1=\len(ys)<\len(y),
\]
contrary to $y\in\Wmin$. Thus $ys>y$. By the exchange property of Coxeter groups (\cite[Theorem 1.4.3]{BB05}), left multiplication by $t$ removes a letter in $ys$. If this letter appears in $y$, we again have $ty<y$, a contradiction. So the removed letter should be $s$, and $ys=ty$. The coset equality follows.

(ii) One can check the coset equality directly. If $r=ysy^{-1}\in W_I$, then $z=ry\in W_Iy$, and $y,z$ are in the same coset, but this is impossible. Hence $r\notin W_I$.

Now we prove the poset isomorphism. First notice that for $a,b\in W_I$,
\[
 ay\leq by\ \Longleftrightarrow\ a\leq b,
 \qquad az\leq bz\ \Longleftrightarrow\ a\leq b.
\]
Then suppose $ay\leq bz=bys$. The lifting property (\cite[Proposition 2.2.7]{BB05}) shows that $ay\leq by$, moreover $a\leq b$. Conversely, if $a\leq b$, then $ay\leq by\leq bys$. Finally, the case $az\leq by$ is impossible since $\pi_I$ is order-preserving. Thus our map is an isomorphism of posets.
\end{proof}

We also record the projection on the double-coset. It is
\begin{equation}\label{eq:double-projection}
 d_s(w)=\min\{\pi_I(w),\pi_I(w)s\},
\end{equation}
where the minimum is taken in Bruhat order. Lemma~\ref{lem:product-order}
shows that this element lies in $\Wmin$ and is the shortest representative in $W_IwW_{\{s\}}$. The projection $d_s$ is also order-preserving, and its fiber is stable under right multiplication by $s$.

In the situation of part~(ii), the $R$-bimodule $\Gamma_{d_s^{-1}(\{x:x\geq y\})}M/\Gamma_{d_s^{-1}(\{x:x>y\})}M$ has an ordered filtration whose subquotients correspond to elements in $W_IyW_{\{s\}}$. In particular, We can order them as
\begin{equation}\label{eq:two-label-order}
 \{az,ay\},\qquad a\in W_I,
\end{equation}
where the $a$'s occur in a decreasing linear extension and $az$ always precedes $ay$.

\subsection{Extensions between $R_u$ and $R_{us}$}

\begin{lemma}\label{lem:graph-ext}
Let $u\in W$ and $s\in S$. Any extension between finite direct sums $\oplus_{n\in\bbZ}R_u[n]^{\oplus k_n}$ and $\oplus_{m\in\bbZ}R_{us}[m]^{\oplus k_m'}$ in $\mathcal{R}$ splits when the right action of $R$ is restricted to $R^s$.
\end{lemma}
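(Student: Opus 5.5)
The plan is to reduce the statement to a concrete computation with the bimodules $R_u$ and $R_{us}$ viewed as $R\otimes_\kk R^s$-modules. First I note that $R_u\otimes_R R_s\cong R_{us}$. Also, once the right action is restricted to $R^s$, the restrictions of $R_u$ and $R_{us}$ to $R\otimes_\kk R^s$ are isomorphic. Indeed, $\Gr(u)$ and $\Gr(us)$ have the same image in $V\times V/\langle s\rangle$, since $(us\xi,\xi)\mapsto(us\xi,\overline{\xi})=(u\xi',\overline{s\xi'})$ with $\xi'=s\xi$. Concretely, the map $R_u\to R_{us}$, $f\mapsto f$, intertwines the right $R^s$-actions, because $r^{s}=r$ for $r\in R^s$. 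So I am asking about extensions in $R\otimes R^s$-modules between direct sums of shifts of a single module $P:=R_u|_{R\otimes R^s}$.

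Let $0\to A\to E\to C\to 0$ be the extension, with $A$ and $C$ finite sums of shifts of $R_u$ and $R_{us}$ (in either order). The key step is to show that $P$ is projective in the category of graded $R\otimes_\kk R^s$-modules annihilated by the ideal $J$ of $\Gr(\{u,us\})$ pushed down to $V\times V/s$. Here $P\cong (R\otimes R^s)/J'$, where $J'$ is the kernel of $R\otimes R^s\to R$, $f\otimes g\mapsto f\cdot (g^{u^{-1}})$. I would like a direct argument rather than abstract projectivity, so I proceed as follows.

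\begin{enumerate}
\item Observe that $E$ is killed by $J'$. On $A$ and on $C$, every element $x\otimes 1-1\otimes x^{u}$ with $x$ in the relevant degree-two piece acts by zero: on $R_u$ we have $x\otimes1 - 1\otimes x^{u}$ restricted, and $R^s$-invariant functions $g$ satisfy $g^{u^{-1}}=g^{(us)^{-1}}$. Hence such elements act on $E$ by maps $C\to A$.
\item Use that $J'$, as an ideal of $R\otimes R^s$, is generated by $c\otimes 1-1\otimes c^{u}$ with $c\in R^{s}$ of degree $\le 4$. These are the elements $\alpha_s^2$ and the $s$-invariant linear forms, since $R^s$ is a polynomial ring on $\ker$ of $\alpha_s^\vee$-type linear forms together with $\alpha_s^2$. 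Each generator acts as a composite $E\to C\xrightarrow{\phi} A\to E$. I want to show the $\phi$'s vanish.
\item Compare with $E$ viewed as an $R\otimes R$-module. On the full $R\otimes R$-level, a generator $c\otimes1-1\otimes c^{u}$ acts by zero on one of $A,C$ and by multiplication by $c^{u}-c^{us}$ on the other. For $c$ an $s$-invariant linear form this difference is $0$, so such generators automatically give zero on both. For $c=\alpha_s^2$ the difference is also $0$, since $\alpha_s^2$ is $s$-invariant. From this I conclude that $J'$ annihilates $E$, so $E$ is a graded module over $(R\otimes R^s)/J'\cong R$ (left action), which is a graded polynomial ring.
\item Finally, $E$ is an extension of graded free left $R$-modules, so it splits as $R$-modules. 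Since the $R\otimes R^s$-action factors through $R$, this splitting is a splitting of $R\otimes R^s$-modules.
\end{enumerate}

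The main obstacle is step (iii). I must check carefully that the cross terms really vanish on $E$, not merely on the graded pieces $A$ and $C$. A generator $g=c\otimes1-1\otimes c^{u}$ kills $A$ and $C$, but a priori it only induces a map $C\to A$ on $E$. To handle this, I would use that $E$ is an $R\otimes R$-module on which $g$ acts as $(c\otimes 1-1\otimes c^{u})$. Since $c\in R^s$, I can write $g$ as a combination of $(x\otimes1-1\otimes x^{u^{-1}})$, which kill $R_u$, and of $(x\otimes1-1\otimes x^{(us)^{-1}})$, which kill $R_{us}$. The product of an element killing $A$ and one killing $C$ kills $E$. Realizing the generators of $J'$ inside the product of the two annihilator ideals, plus something vanishing, is the delicate point. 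If this fails, I would fall back on Soergel's explicit description of $\operatorname{Ext}^1(R_{us},R_u)$ in \cite[Lemma 5.8]{Soe07}, where the nonsplit extension is $R(\{u,us\})$. I would then check directly that $R(\{u,us\})$ restricted to $R\otimes R^s$ is isomorphic to $R_u\oplus R_{us}$, using the decomposition $R=R^s\oplus \alpha_s R^s$ on the right.
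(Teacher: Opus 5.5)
Your step (iii) does not establish what you conclude from it. Knowing that a generator $g$ of $J'$ acts by zero on both $A$ and $C$ is just step (i) again. With the indices corrected, the generators are $f^{u^{-1}}\otimes1-1\otimes f$ with $f\in R^s$, which visibly kill both $R_u$ and $R_{us}$, so the bookkeeping with $c^u-c^{us}$ is not needed. This only tells you that $g$ acts on $E$ through some map $\phi\colon C\to A$, and the inference ``hence $J'E=0$'' is unjustified.

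Write $I_w=\Ann_D(R_w)$. The repair you sketch, putting the generators into $I_uI_{us}$ ``plus something vanishing'', cannot work literally. $I_u$ and $I_{us}$ are generated in degree $2$, so $I_uI_{us}$ has nothing in degree $2$. But the generators coming from $s$-invariant linear forms have degree $2$; only the $\alpha_s^2$-generator factors, as $(a-b)(a+b)$ with $a=\alpha_s^{u^{-1}}\otimes1$ and $b=1\otimes\alpha_s$.

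So as written your main line has a hole, and the proof rests on your fallback, which is essentially the paper's argument. The paper takes the generator $R(\Gr(u)\cup\Gr(us))$ of $\operatorname{Ext}^1_D(R_u,R_{us})$ from \cite[Lemma~5.8]{Soe07}. It untwists by $R_{u^{-1}}$ to $0\to R_s[-2]\to R\otimes_{R^s}R\to R\to0$, splits this over $R^s$ via $h\mapsto h\otimes1$, and passes to all extensions and finite sums by additivity. You would need that same passage, since you only propose to check the generator.

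The hole is easy to close inside your own framework, and then you get a genuinely different, more self-contained proof. Because $D$ is commutative, multiplication by $g$ is a $D$-linear endomorphism of $E$ that kills $A$ and lands in $A$. So it equals $E\twoheadrightarrow C\xrightarrow{\phi}A\hookrightarrow E$ with $\phi$ a bimodule map of degree $\deg g$.

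There are no nonzero bimodule maps between shifts of $R_u$ and $R_{us}$ in either direction. The element $d=\alpha_s^{u^{-1}}\otimes1-1\otimes\alpha_s$ kills $R_u$ and acts on $R_{us}\cong R$ as multiplication by $2\alpha_s^{u^{-1}}\neq0$. The element $d'=\alpha_s^{u^{-1}}\otimes1+1\otimes\alpha_s$ does the opposite, and $R$ is a domain. Hence $\phi=0$ and $J'E=0$. In your language: $dg$ (or $d'g$) lies in $I_uI_{us}$, which kills $E$, and $d$ (or $d'$) is a nonzerodivisor on $A$. Your step (iv) then finishes the proof.

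Compared with the paper, this route needs no knowledge of $\operatorname{Ext}^1_D(R_u,R_{us})$. It treats every extension, in either order and with arbitrary finite sums of shifts, in one stroke. The paper's route is shorter given the citation, but it verifies the splitting only on the generating extension and leaves the rest to additivity.
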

\begin{proof}
Since $s\in S$ fixes a hyperplane in $V$, graphs of $u$ and $us$ meet in codimension one. Recall that $D=R\otimes_\kk R$. By 
\cite[Lemma~5.8]{Soe07}, $\operatorname{Ext}^1_D(R_u,R_{us})$ is free of rank one over $R(\Gr(u)\cap\Gr(us))$. The generating class is of the form
\begin{equation}\label{eq:rank-one-ext}
    0\longrightarrow R_{us}[-2]\longrightarrow R(\Gr(u)\cup\Gr(us))\longrightarrow R_u\longrightarrow0.
\end{equation}

We first show that this generating extension splits after restriction. Left tensoring by $R_{u^{-1}}$ preserves the exactness of \eqref{eq:rank-one-ext}, while the middle term becomes $R(e,s)=R\otimes_{R^s}R$ (see \cite[Remark 4.3]{Soe07}). In the new exact sequence
\begin{equation}\label{eq:new-ses}
    0\longrightarrow R_s[-2]\longrightarrow R\otimes_{R^s}R\overset{\mu}{\longrightarrow} R_e\longrightarrow0,
\end{equation}
the rightmost map is given by $\mu(f\otimes g)=fg$, then the map $h\mapsto h\otimes1$ defines the section of $\mu$ after right restriction to $R^s$. Indeed, if $r\in R^s$, $hr\otimes1=h\otimes r$ in $R\otimes_{R^s}R$. Therefore, \eqref{eq:new-ses} splits after restriction. Moreover, \eqref{eq:rank-one-ext} also splits since we can obtain it by applying an exact functor to \eqref{eq:new-ses}. The statement is true if we take finite direct sums and grading shifts since $\operatorname{Ext}^1$ is additive.
\end{proof}

\subsection{Right translation functor}

Define
\[
 \Theta_s(M)=M\otimes_R B_s
 =\bigl(\Res_{R\text{-}R^s}M\otimes_{R^s}R\bigr)[1].
\]
The functor $\Theta_s$ is exact because $R$ is free of rank two over $R^s$. It preserves $\FD(W)$ by \cite[Proposition~5.7]{Soe07}.

\Needspace{14\baselineskip}
\begin{lemma}\label{lem:support-translation}
Let $M\in\FD(W)$ and $U\subseteq W$ satisfy $Us=\{us:u\in U\}=U$ for some $s\in S$.
\begin{enumerate}
\item The inclusion $\Gamma_U M\hookrightarrow M$ induces an isomorphism
\begin{equation}\label{eq:support-translation}
 \Theta_s(\Gamma_U M)\xrightarrow{\ \sim\ }\Gamma_U(\Theta_sM).
\end{equation}
\item If $U'\subseteq U$ also satisfies $U's=U'$, then
\begin{equation}\label{eq:quotient-translation}
 \Theta_s\bigl(\Gamma_U M/\Gamma_{U'}M\bigr)
 \cong\Gamma_U(\Theta_sM)/\Gamma_{U'}(\Theta_sM).
\end{equation}
\end{enumerate}
\end{lemma}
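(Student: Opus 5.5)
The plan is to reduce everything to support considerations for $D$-modules that are finitely generated over the right copy of $R$, and to handle part (ii) by exactness of $\Theta_s$.

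\emph{Step 1: support of $\Theta_s N$.} For any $N\in\mathcal{R}$ with $\Supp(N)\subset\Gr(A)$, we have $\Supp(\Theta_sN)\subset\Gr(A)\cup\Gr(As)$. Indeed, $\Theta_sN=N\otimes_R R\otimes_{R^s}R[1]$ is a quotient of $N\otimes_{R^s}R$, and Soergel's support calculus gives $\Supp(M\otimes_R M')\subset\Gr(AA')$. Here $R\otimes_{R^s}R$ is supported on $\Gr(\{e,s\})$. Applied to $N=\Gamma_UM$ with $Us=U$, this gives $\Supp\Theta_s(\Gamma_UM)\subset\Gr(U)$. Since $\Theta_s$ is exact, $\Theta_s(\Gamma_UM)\to\Theta_sM$ is injective, and its image therefore lies in $\Gamma_U(\Theta_sM)$. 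So the map in \eqref{eq:support-translation} is well defined and injective.

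\emph{Step 2: surjectivity.} Consider the exact sequence $0\to\Gamma_UM\to M\to M/\Gamma_UM\to0$. Applying $\Theta_s$ gives
\[
0\to\Theta_s(\Gamma_UM)\to\Theta_sM\to\Theta_s(M/\Gamma_UM)\to0 .
\]
It suffices to show that $\Gamma_U(\Theta_s(M/\Gamma_UM))=0$: then any element of $\Gamma_U(\Theta_sM)$ maps to zero and so lies in $\Theta_s(\Gamma_UM)$. Put $Q=M/\Gamma_UM$.

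\begin{itemize}
\item By the ordered filtration (Lemma~\ref{lem:ordered-flags}), $Q\in\FD(W)$ has subquotients $\Delta_w$ with $w\notin U$. Hence $Q$ is supported on $\Gr(A\setminus U)$.
\item Moreover $\Gamma_UQ=0$. This follows because $\FD$-objects have no elements supported on smaller unions of graphs unless allowed by the filtration, by Soergel's Lemma 5.8 and the fact that $\Gamma_U M\to M\to Q$ is built from the ordered filtration once $U\cap A$ is arranged as an initial segment. This requires $U$ to be an upper set, or an appropriate arrangement; I will take $U$ to be a union of upper-set differences as in the intended applications, or argue more generally with the first bullet below.
\item Then $\Theta_sQ$ has a filtration with subquotients $\Theta_s\Delta_w$, each an extension of $\Delta_w,\Delta_{ws}$ with $w,ws\notin U$, by $Us=U$. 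An element of $\Theta_sQ$ supported in $\Gr(U)$ has support in $\Gr(U)\cap\Gr((A\setminus U)\cup(A\setminus U)s)$, which has smaller dimension.
\item Objects in $\FD$ are torsion-free over each graph component, since each $R_w$ is torsion-free on $\Gr(w)$. So no nonzero element has support of dimension less than $\dim V$. Hence $\Gamma_U(\Theta_sQ)=0$.
\end{itemize}

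The cleanest formulation is the following: for $N\in\FD(W)$ with $\Supp N\subset\Gr(B)$ and $B\cap U=\emptyset$, we have $\Gamma_UN=0$, since every nonzero submodule of $N$ has support equal to a union of full graphs in $B$. This is the step I expect to be the main obstacle. It rests on the observation that a submodule of an iterated extension of torsion-free modules $R_w$ has support a union of whole graphs, which one proves by induction on the filtration length.

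\emph{Step 3: part (ii).} From (i) applied to $U$ and $U'$, the inclusions give a commutative square. Exactness of $\Theta_s$ on $0\to\Gamma_{U'}M\to\Gamma_UM\to\Gamma_UM/\Gamma_{U'}M\to0$ then identifies $\Theta_s(\Gamma_UM/\Gamma_{U'}M)$ with $\Theta_s\Gamma_UM/\Theta_s\Gamma_{U'}M$, which is $\Gamma_U(\Theta_sM)/\Gamma_{U'}(\Theta_sM)$ by \eqref{eq:support-translation}. This gives \eqref{eq:quotient-translation}.
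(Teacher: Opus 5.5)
\textbf{A gap relative to the stated generality.} Your argument is sound when $\Gamma_UM$ is a term of an ordered filtration, i.e.\ when $U\cap A$ is upward closed in $A$. It has three parts:
\begin{enumerate}
\item well-definedness and injectivity via support calculus and exactness of $\Theta_s$;
\item surjectivity by showing $\Gamma_U(\Theta_sQ)=0$ for $Q=M/\Gamma_UM$;
\item the observation that a nonzero element of a module with a finite filtration by shifts of $R_x$'s has support containing a full graph $\Gr(x)$ with $x$ a label.
\end{enumerate}
The upward-closedness is used in the first bullet of Step~2. Lemma~\ref{lem:ordered-flags} describes $\Gamma_UM$ and $M/\Gamma_UM$ only when $U\cap A$ is an initial segment of a decreasing linear extension of $A$. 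For a general $s$-stable $U$ this fails; for example $U=\{e,s\}$ is $s$-stable but a lower set. Without a $\Delta$-flag on $Q$ with labels outside $U$, you have no control over $\Gamma_U(\Theta_sQ)$.

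So as written you prove the lemma only for upper sets. This covers every use in the paper, since $U_{y,s}$ and $U^+_{y,s}$ are preimages of upper sets under the order-preserving $d_s$. The statement, however, allows any $U$ with $Us=U$. Your second bullet ($\Gamma_UQ=0$) is not needed: bullets three and four give $\Gamma_U(\Theta_sQ)=0$ directly from the labels, once $Q$ is known to have such a flag.

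\textbf{How the paper avoids this.} The paper never needs a filtration on $M/\Gamma_UM$. It proceeds as follows.
\begin{enumerate}
\item It writes $\Theta_sM\cong(M\otimes_AD)[1]$ with $A=\kk$-tensor $R\otimes_{\kk}R^s$.
\item It uses $Us=U$ to see that $X=\Gr(U\cap F)$ is saturated for the finite map $p:\Spec D\to\Spec A$. Hence $\Gamma_UM=H^0_J(M)$ and $\Gamma_U(M\otimes_AD)=H^0_{JD}(M\otimes_AD)$, where $J\subset A$ is the ideal of $p(X)$.
\item Since $D=A\oplus A\beta_s$ is free over $A$, torsion commutes with this base change: $H^0_{JD}(M\otimes_AD)=H^0_J(M)\otimes_AD$.
\end{enumerate}
This gives the isomorphism for every $s$-stable $U$ with no order-theoretic hypothesis, and it is the idea you are missing for full generality.

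Your route stays inside Soergel's filtration machinery and makes the well-definedness and injectivity of the map transparent, but it buys less generality. Part~(ii) you handle exactly as the paper does.
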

\begin{proof}
We prove (i) by identifying the two sides as submodules of certain $D$-module. We first reconstruct $\Theta_sM$. Put
\[
 A=R\otimes_{\kk}R^s\subseteq D=R\otimes_{\kk}R.
\]
We view $M$ as an $A$-module. There is a graded $D$-module isomorphism
\begin{equation}\label{eq:theta-base-change}
 \varphi:M\otimes_A D\xrightarrow{\ \sim\ }M\otimes_{R^s}R,
 \qquad m\otimes(f\otimes g)\longmapsto fm\otimes g.
\end{equation}
Here the $D$-action on $M\otimes_A D$ is the product with the second factor, and on $M\otimes_{R^s}R$, $D$ acts by the usual left and right multiplication. Notice that $f\otimes g=(f\otimes1)\cdot(1\otimes g)$ and $f\otimes1\in A$, it is easy to check that $\varphi$ is a well-defined homomorphism of $D$-modules. The inverse of $\varphi$ is given by $m\otimes g\mapsto m\otimes(1\otimes g)$. Thus $\varphi$ is an isomorphism, and $\Theta_sM\cong (M\otimes_A D)[1]$.

Then we explain the action of $\Gamma_U$ on both $M$ and $\Theta_sM$. If we set $R=\kk[\alpha_s,x_2,\dots,x_{\dim V}]$ where $s(\alpha_s)=-\alpha_s$ and $s$ fixes the other coordinates, then $R^s=\kk[\alpha_s^2,x_2,\cdots,x_{\dim V}]$. The inclusion $A\hookrightarrow D$ induces the quotient map $p:V\times V=\Spec D\to\Spec A= V\times (V/\langle s\rangle)$. For any $(u\xi,\xi)\in\Gr(U)$, the fiber of $p(u\xi,\xi)$ consists of two points: $(u\xi,\xi)$ and $(u\xi,s\xi)$. Notice that $(u\xi,s\xi)=(us(s\xi),s\xi)\in\Gr(us)$. Since $Us=U$, we have $p^{-1}\bigl(p(\Gr(U)\bigr)=\Gr(U)$.

Let $F_0\subset W$ be minimal with $\Supp(M)\subset \Gr(F_0)$, and $F=F_0\cup F_0s$. Then $F$ is finite, and \eqref{eq:finite-support} allows us to replace $U$ by the finite set $U\cap F$ in $\Gamma_U M$. Moreover, $\Theta_s M$ has support contained in $\Gr(F)$, the same replacement applies to $\Gamma_U(\Theta_sM)$.

Let $X=\Gr(U\cap F)$. The fact $(U\cap F)s=U\cap F$ implies
\begin{equation}\label{eq:inverse-support}
    p^{-1}\bigl(p(X)\bigr)=X.
\end{equation}
By our construction, $p$ is finite, $Z=p(X)$ is therefore closed. Take its defining ideal $J\subset A$. Equation \eqref{eq:inverse-support} says that $X$ has defining ideal $JD\subset D$.

For an $A$-module $N$, write
\[
H_J^0(N)=\{n\in N:J^kn=0\text{ for some }k\in\bbN\}.
\]
Consider the cyclic $D$-module $Dm\cong D/\Ann_D(m)$ with $m\in M$, the following are equivalent:
\[
 \begin{aligned}
 \Supp_D(Dm)\subseteq X
 &\ \Longleftrightarrow\ JD\subseteq\sqrt{\Ann_D(m)}\\
 &\ \Longleftrightarrow\ J^km=0\text{ for some }k\in\bbN.
 \end{aligned}
\]
The notation $\sqrt{\cdot}$ means the radical ideal. Thus
\begin{equation}\label{eq:support-torsion-A}
\begin{split}
    \Gamma_U M&=\{m\in M:\Supp(m)\subset \Gr(U)\}\\
    &=\{m\in M:\Supp(Am)\subset Z\}\\
    &=\{m\in M:J^km=0\text{ for some }k
    \in\bbN\}=H_J^0(M),
\end{split}
\end{equation}
and similarly
\begin{equation}\label{eq:support-torsion-D}
    \Gamma_U(M\otimes_A D)=H_{JD}^0(M\otimes_A D).
\end{equation}
Finally, we show that $\Theta_s$ and $\Gamma_U$ commute. We put $\beta_s=1\otimes\alpha_s\in D$, then
\begin{equation}\label{D-decomposition}
    R=R^s\oplus R^s\alpha_s,\qquad D=A\oplus A\beta_s.
\end{equation}
Every element in $M\otimes_A D$ can be written uniquely as
\[
m\otimes d=m_0\otimes1+m_1\otimes\beta_s.
\]
It follows that if $m\otimes d\neq0$,
\[
(JD)^k(m\otimes d)=0\Longleftrightarrow J^km_0=J^km_1=0.
\]
This equivalence and \eqref{D-decomposition} induce an isomorphism of $D$-modules
\begin{equation}\label{eq:torsion-base-change}
 H_{JD}^0(M\otimes_A D)=H_J^0(M)\otimes_A D.
\end{equation}
Now combining \eqref{eq:theta-base-change}, \eqref{eq:support-torsion-A}, \eqref{eq:support-torsion-D} and \eqref{eq:torsion-base-change} gives the isomorphism in (i).

Apply the exact functor $\Theta_s$ to
\[
 0\longrightarrow\Gamma_{U'}M\longrightarrow\Gamma_U M
 \longrightarrow\Gamma_U M/\Gamma_{U'}M\longrightarrow0.
\]
The assertion in (ii) is a direct corollary of part (i).
\end{proof}

\section{Extension between modules supported in adjacent coset graphs}\label{sec:splitting}

In this section, we will give another criterion whether a short exact sequence in $\mathcal{R}$ splits after restriction. This is essential for our later discussion.

\begin{proposition}\label{prop:adjacent}
Suppose that $y,z=ys\in\Wmin$ and $y<z$. Let $E\in\FD(W)$ fit into a short exact sequence
\begin{equation}\label{eq:adjacent-extension}
 0\longrightarrow A_1\otimes_R R_z\longrightarrow E
 \longrightarrow A_0\otimes_R R_y\longrightarrow0,
 \qquad A_0,A_1\in\cB_I.
\end{equation}
Then this sequence splits after
restricting the right action of $R$ to $R^s$.
\end{proposition}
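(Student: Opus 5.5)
We reduce the extension \eqref{eq:adjacent-extension} to the rank-one situation of Lemma~\ref{lem:graph-ext} by peeling off the $\cB_I$-factors. The key observation is that both ends are obtained by right tensoring with a standard bimodule. Write $A_0\otimes_R R_y$ and $A_1\otimes_R R_z=(A_1\otimes_R R_y)\otimes_R R_s$. Left/right tensoring with $R_{y^{-1}}$ is an exact autoequivalence of $\mathcal{R}$, with inverse $-\otimes_R R_y$. It turns \eqref{eq:adjacent-extension} into an extension
\[
 0\to A_1\otimes_R R_{y s y^{-1}}\to E\otimes_R R_{y^{-1}}\to A_0\to0 .
\]
This looks less convenient. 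Instead we argue directly via a Hom/Ext computation over $D$.

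\textbf{Step 1 (reduce to $A_0,A_1$ Bott--Samelson-free of the $I$-part).} Since $A_0,A_1\in\cB_I\subset\FD(W_I)$, each has an ordered filtration with subquotients shifts of $R_a$, $a\in W_I$. Tensoring with $R_y$, resp.\ $R_z$, gives filtrations of the two ends with subquotients $R_{ay}$, resp.\ $R_{az}=R_{ay s}$. Pulling back and pushing out along these filtrations, the class of $E$ in $\operatorname{Ext}^1_D(A_0R_y,A_1R_z)$ is assembled from classes in $\operatorname{Ext}^1_D(R_{ay},R_{bys})$ for $a,b\in W_I$. The aim is to show that, after restricting the right action to $R^s$ (i.e.\ regarding everything as $A=R\otimes_\kk R^s$-modules), the class vanishes.

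\textbf{Step 2 (use the support map $p$).} Work over $A=R\otimes_\kk R^s$ as in the proof of Lemma~\ref{lem:support-translation}. Under $p:V\times V\to V\times V/\langle s\rangle$, the graphs $\Gr(ay)$ and $\Gr(ays)$ have the same image. The map $a\mapsto ay$ identifies supports of $A_0R_y$ and $A_1R_z$ after restriction with copies of $p(\Gr(W_Iy))$. In particular $\Res(A_1\otimes_R R_z)\cong \Res(A_1\otimes_R R_y\otimes_R R_s)$. One checks that restriction to $R^s$ on the right of $-\otimes R_s$ is isomorphic to restriction of the original, since $s$ acts trivially on $R^s$. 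So both ends become, as $A$-modules, $A_i\otimes_R R_y$ restricted. The key splitting is then that the sequence is a pullback of $\Theta_s$-type data. Concretely, we construct a map $A_0\otimes_R R_y\to E$ of $A$-modules as follows. Take the ordered filtration of $E$ from Lemma~\ref{lem:ordered-flags} with the order \eqref{eq:two-label-order}. Each consecutive pair $\{az,ay\}$ forms an extension of the type in Lemma~\ref{lem:graph-ext}, which splits over $R^s$.

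\textbf{Step 3 (glue local splittings).} This is the main obstacle: local splittings for each pair $(az,ay)$ must be made compatible across different $a$. We would handle it by a grading-rank argument in the spirit of Lemma~\ref{lem:hom-split}. Consider the restricted sequence as $A$-modules and apply $\operatorname{Hom}_A(A_0\otimes_R R_y,-)$. We show the map $\operatorname{Hom}_A(A_0R_y,E)\to\operatorname{Hom}_A(A_0R_y,A_0R_y)$ is surjective by comparing graded dimensions. Via Step~2 and adjunction $\operatorname{Hom}_{A}(M,N)\cong\operatorname{Hom}_D(M,N\otimes_{R^s}R)$, these Hom spaces are Hom spaces into $\Theta_s$-images, which are in $\cB$ up to the twist by $R_y$. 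Hence they are graded free with ranks additive in standard multiplicities by \eqref{eq:hom-additivity}. Since $E$ is standard-exact with respect to the sequence by \eqref{eq:layer-multiplicities}, additivity yields the dimension equality. Lifting $\id$ then gives the desired section over $R^s$. If the freeness for the twisted targets is delicate, fall back on induction on the length of the filtration of $A_0$, using Lemma~\ref{lem:graph-ext} at each step together with vanishing of Ext between incomparable labels \cite[Lemma 5.8]{Soe07} to move splittings past each other.
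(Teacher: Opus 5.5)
Your Step~3 is the whole content of the proposition, and nothing you cite supplies it.

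Formula \eqref{eq:hom-additivity} gives additivity in the \emph{source}: for a fixed target $N\in\cB$, $\grk\Homg(M,N)$ is additive in the $\Delta$-multiplicities of $M$. You apply the covariant functor $\operatorname{Hom}_A(A_0\otimes_R R_y,-)$, so you need additivity in the \emph{target}, and that fails in general. For $0\to R_s[-2]\to R(\Gr(e)\cup\Gr(s))\to R_e\to0$, the module $\Homg(R_e,R(\Gr(e)\cup\Gr(s)))=\Gamma_{\{e\}}R(\Gr(e)\cup\Gr(s))$ is free of rank one generated in degree $2$. By contrast, $\Homg(R_e,R_s[-2])\oplus\Homg(R_e,R_e)$ is generated in degree $0$. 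For your sequence, the equality of graded dimensions you want is equivalent to the splitting itself, so asserting it is circular.

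Nor are the targets ``in $\cB$ up to the twist by $R_y$''. One has $\Theta_s(A_0\otimes_R R_y)\cong A_0\otimes_R B_r\otimes_R R_y$ with $B_r=R\otimes_{R^r}R[1]$, where $r=ysy^{-1}\notin W_I$ is in general not simple. Moreover $\Theta_sE$ is only known to lie in $\FD(W)$. Switching to the correct variable, $\operatorname{Hom}_A(-,A_1\otimes_R R_z)$, does not help: the adjunction produces the target $A_1\otimes_R B_r\otimes_R R_z$, and neither the paper nor \cite{Soe07} gives a Hom formula for it.

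Your fallback does not close the gap either. Lemma~\ref{lem:graph-ext} only kills the extension between $R_{az}$ and $R_{ay}$ with the same $a$. \cite[Lemma 5.8]{Soe07} only kills extensions over $D$ between graphs meeting in codimension $\ge2$. After restricting to $R\otimes_\kk R^s$, however, $\Res R_{bz}\cong\Res R_{by}$. Hence $\operatorname{Ext}^1$ between the restrictions of $R_{ay}$ and $R_{bz}$ is nonzero whenever $a^{-1}b$ is a reflection of $W_I$; such pairs occur, e.g.\ for $A_0=R$, $A_1=B_t$. The internal extensions of $A_0\otimes_R R_y$ and $A_1\otimes_R R_z$ also stay non-split after restriction. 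So killing the diagonal pieces does not kill the class. Inducting along the $\Delta$-flag of $A_0$ also fails, because it leaves $\cB_I$, and the hypothesis $A_0,A_1\in\cB_I$ is essential.

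The missing idea is the paper's, and the reduction you dismissed as ``less convenient'' is the right first step:
\begin{enumerate}
\item Twist by $R_{y^{-1}}$, so the ends become $A_1\otimes_R R_r$ and $A_0$.
\item Choose $\lambda\in(V^*)^{W_I}$ with $\lambda^r\neq\lambda$ (Lemma~\ref{lem:separating}), so that $R=Q\oplus Q\lambda$ with $Q=R^r$.
\item Let $\lambda$ act on the right by left multiplication. This is well defined because $(L_\lambda-\rho_{\lambda^r})(L_\lambda-\rho_\lambda)=0$.
\item This turns the $(R,Q)$-restricted sequence into a standard-exact sequence $0\to A_1\to E^\sharp\to A_0\to0$ in $\FD(W_I)$ with $A_1\in\cB_I$.
\end{enumerate}
Lemma~\ref{lem:hom-split} then applies, with the Soergel bimodule $A_1$ as target, which is the variable in which additivity actually holds.
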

\begin{proof}

To prove this proposition, we will define a new $R$-bimodule structure which is compatible with $R-R^s$-action on these bimodules, apply Lemma~\ref{lem:hom-split} to this new action, and finally return to the restricted sequence.

Note that localization preserves exactness, the support of $E$ is contained in the graph of double coset $W_IyW_{\{s\}}=W_Iz\sqcup W_Iy$ by Lemma \ref{lem:product-order}. In \eqref{eq:adjacent-extension}, $A_0\otimes_R R_y$ has no nonzero element supported in $\Gr(W_Iz)$. The kernel of $E\to A_0\otimes_R R_y$ is therefore exactly $\Gamma_{W_Iz}E$. Thus \eqref{eq:adjacent-extension} is standard-exact.

Tensor on the right by $R_{y^{-1}}$ and put
\[
 \widetilde E=E\otimes_R R_{y^{-1}},\qquad r=ysy^{-1},\qquad Q=R^r.
\]
We have a short exact sequence
\begin{equation}\label{eq:untwisted-extension}
 0\longrightarrow A_1\otimes_R R_r\longrightarrow\widetilde E
 \longrightarrow A_0\longrightarrow0.
\end{equation}
The support of $\widetilde E$ is contained in $\Gr(W_I\cup W_Ir)$. We claim that
\begin{equation}\label{eq:splitting-equivalence}
    \eqref{eq:adjacent-extension} \text{ splits over }(R,R^s)\text{ if and only if }\eqref{eq:untwisted-extension}\text{ splits over }(R,Q).
\end{equation}
Indeed, suppose that we can find an $(R,R^s)$-linear section $\varphi:E\to A_1\otimes_R R_z$. Write $\widetilde\varphi=\varphi\otimes \id_{R_{y^{-1}}}:\widetilde E\to A_1\otimes_R R_r$. For any $q\in Q$, $q^y\in R^s$. Then we can check that for $e\otimes c\in \widetilde E$,
\[
\widetilde\varphi(e\otimes c).q=\varphi(e)\otimes cq^y=\varphi(e).q^y\otimes c=\varphi(e.q^y\otimes c)=\varphi\bigl((e\otimes c).q\bigr).
\]
Thus $\widetilde\varphi$ gives an $(R,Q)$-linear section, and the equivalence of two splitting assertions follows.

Now we construct a new action on $E_0=\Res_{R-Q}\widetilde E$ to make it an $R$-bimodule. Lemma \ref{lem:product-order} gives $r\notin W_I$. Applying Lemma \ref{lem:separating}, we can choose $\lambda\in(V^*)^{W_I}$ with $\lambda^r\neq\lambda$. Since $r=ysy^{-1}$ is a reflection, we have $R=Q\oplus Q\lambda$. More precisely,
\[
R\cong Q[X]/\langle X^2-(\lambda+\lambda^r)X+\lambda\lambda^r\rangle.
\]
Define the right action of $R$ on $E_0$ by retaining the $Q$-action and prescribing
\begin{equation}\label{eq:new-action}
 m\cdot_{\mathrm{new}}\lambda=\lambda m.
\end{equation}
We must check that this action is well-defined. Let $L_f$ and $\rho_f$ denote the original left and right multiplications by $f\in R$ on $\widetilde E$. Note that the left and right actions of $\lambda\in R^{W_I}$ coincide on objects in $\cB_I$. Therefore
\[
 L_\lambda=\rho_\lambda\quad\text{on }A_0,
 \qquad
 L_\lambda=\rho_{\lambda^r}\quad\text{on }A_1\otimes_R R_r.
\]
It follows that
\[
 (L_\lambda-\rho_\lambda)\widetilde E\subseteq A_1\otimes_R R_r,
 \qquad
 (L_\lambda-\rho_{\lambda^r})(A_1\otimes_R R_r)=0.
\]
Composing these two operators we have
\[
(L_\lambda-\rho_{\lambda^r})(L_\lambda-\rho_\lambda)
  =L_\lambda^2-\rho_{(\lambda+\lambda^r)} L_\lambda+\rho_{(\lambda\lambda
  ^r)}=0.
\]
This is the quadratic relation that kills $\lambda$. Since $R$ is commutative, \eqref{eq:new-action} actually defines an $R$-bimodule structure on $E_0$. We denote it by $E^\sharp$.

Apply our new action to each term in \eqref{eq:untwisted-extension}, $A_0$ does not change because $\lambda$ is $W_I$-invariant, while $A_1\otimes_R R_r$ is isomorphic to $A_1$ because $q\in Q$ commutes with $R_r$. Thus we obtain a sequence relative to the new action
\begin{equation}\label{eq:sharp-extension}
 0\longrightarrow A_1\longrightarrow E^\sharp
 \longrightarrow A_0\longrightarrow0.
\end{equation}
A quick check (we omit here) shows that \eqref{eq:sharp-extension} is also exact. 

We have seen $\Supp(E)\subset \Gr(W_Iz\sqcup W_Iy)$. Lemma \ref{lem:ordered-flags} allows us to arrange the elements in the cosets as $\{az,ay\}$ with the $a$'s in decreasing Bruhat order. Then the subquotients are of the form $\oplus_{n\in\bbZ}\Delta_{az}[n]^{\oplus k_n}$ or $\oplus_{m\in\bbZ}\Delta_{ay}[m]^{\oplus k'_m}$. By Lemma \ref{lem:graph-ext}, any extension between such two $R$-bimodules splits after restricting the right $R$-action to $R^s$. Tensoring with $R_{y^{-1}}$ makes the subquotients the finite direct sum of grading shifts of $\Delta_{ar}$ and $\Delta_a$ respectively. For the same reason with \eqref{eq:splitting-equivalence}, any extension between these two subquotients also splits as $(R,Q)$-bimodules.

Any $(R,Q)$-linear splitting map $f$ in an extension is preserved by our new action because
\[
f(m\cdot_{\mathrm{new}}\lambda)
 =f(\lambda m)=\lambda f(m)
 =f(m)\cdot_{\mathrm{new}}\lambda.
\]
Thus the above extensions remain split under the new action. Moreover, the new action identifies $R_{ar}$ and $R_a$. To see this, take $h\in R$ and $q\in Q$, $q$ acts on $R_{ar}$ and $R_a$ respectively as
\[
h.q=hq^{ra^{-1}}=hq^{a^{-1}},\qquad h.q=hq^{a^{-1}}.
\]
The new right action of $\lambda$ is original left multiplication. Thus $R_{ar}\cong R_a$ under the new action.
It follows that $E^\sharp$ has an ordered filtration with each subquotient of the form $\oplus_{n\in\bbZ}\Delta_a[n]^{\oplus l_n}$, and the $a$'s occur in a decreasing linear extension. Lemma \ref{lem:ordered-flags}(ii) gives
\begin{equation}\label{eq:sharp-standard}
 E^\sharp\in\FD(W_I).
\end{equation}
To apply the splitting criterion in Lemma \ref{lem:hom-split}, we need to prove the standard-exactness of \eqref{eq:sharp-extension}. The split extension between $R_{ay}$ (resp. $R_a$) and $R_{az}$ (resp. $R_{ar}$) shows that
\begin{equation}
    \begin{split}
    (E^\sharp:R_a[n])&=(E_0:R_{ar}[n])+(E_0:R_a[n])\\
    &=(\widetilde E:R_{ar}[n])+(\widetilde E:R_a[n])\\
    &=(E:R_{az}[n])+(E:R_{ay}[n]).
    \end{split}
\end{equation}
Use the relation $\Delta_x=R_x[-\len(x)]$ and the length additivity we obtain
\[
(E^\sharp:\Delta_a[n])=(E:\Delta_{az}[n+\len(z)])+(E:\Delta_{ay}[n+\len(y)]).
\]
By the discussion at the beginning of the proof,
\[
(E:\Delta_{az}[n+\len(z)])=(A_1\otimes_R R_z:\Delta_{az}[n+\len(z)])=(A_1:\Delta_a[n]),
\]
\[
(E:\Delta_{ay}[n+\len(y)])=(A_0\otimes_R R_y:\Delta_{ay}[n+\len(y)])=(A_0:\Delta_a[n]).
\]
Therefore,
\begin{equation}\label{eq:sharp-additive}
 (E^\sharp:\Delta_a[n])
 =(A_1:\Delta_a[n])+(A_0:\Delta_a[n])
 \qquad(a\in W_I,\ n\in\bbZ).
\end{equation}
That is, \eqref{eq:sharp-extension} is standard-exact.

Then Lemma \ref{lem:hom-split} indicates that \eqref{eq:sharp-extension} splits in $\mathcal{R}$. Then we are given a section $E^\sharp\to A_1$. Forgetting the new action of $\lambda$ provides an $(R,Q)$-linear section $\widetilde E\to A_1\otimes_R R_r$ of \eqref{eq:untwisted-extension}. Thus \eqref{eq:untwisted-extension} splits over $(R,Q)$. By the equivalence \eqref{eq:splitting-equivalence}, the sequence \eqref{eq:adjacent-extension} splits over $(R,R^s)$.
\end{proof}

\section{Parabolic subquotients of Soergel bimodules}\label{sec:induction}

Let $\FI$ be the full subcategory of $\FD(W)$ consisting of the modules $M$ satisfying
\[
\GY{y}(M)\cong \Dy{y}{N_y} \text{ with }N_y\in\cB_I,\text{ for all }y\in \Wmin.
\]
The object $R$ belongs to $\FI$. The following lemma is about the structure of $\FI$.
\begin{lemma}\label{FI-closed}
    The category $\FI$ is closed under finite direct sums, grading shifts and direct summands.
\end{lemma}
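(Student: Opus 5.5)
The plan is to use two facts: $\Gamma_U$ is additive, and the functor $G^I_y$ commutes with finite direct sums and grading shifts. Since $\Gamma_U(M\oplus M')=\Gamma_UM\oplus\Gamma_UM'$ (support of an element of a direct sum is the union of supports of its components) and $\Gamma_U(M[n])=(\Gamma_UM)[n]$, we get
\[
\GY{y}(M\oplus M')\cong\GY{y}(M)\oplus\GY{y}(M'),\qquad \GY{y}(M[n])\cong\GY{y}(M)[n].
\]
Likewise $\Dy{y}{N\oplus N'}\cong\Dy{y}{N}\oplus\Dy{y}{N'}$ and $\Dy{y}{N[n]}\cong\Dy{y}{N}[n]$. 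Since $\cB_I$ is closed under direct sums and shifts, closure of $\FI$ under these operations follows at once. I also note that $\FD(W)$ is closed under direct sums, shifts and summands; summands are covered by \cite[Section 5]{Soe07}, or directly from additivity of the ordered filtration in Lemma~\ref{lem:ordered-flags}.

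The real content is closure under direct summands. Suppose $M=M'\oplus M''\in\FI$. Then
\[
\GY{y}(M')\oplus\GY{y}(M'')\cong\Dy{y}{N_y}, \qquad N_y\in\cB_I.
\]
So $\GY{y}(M')$ is a direct summand of $\Dy{y}{N_y}$, and I must show that every summand of $\Dy{y}{N}$ with $N\in\cB_I$ has the form $\Dy{y}{N'}$ with $N'\in\cB_I$. The approach is to untwist: the functor $-\otimes_R R_{y^{-1}}$ is an autoequivalence of $\mathcal R$ with inverse $-\otimes_R R_y$, since $R_y\otimes_R R_{y^{-1}}\cong R_e=R$. It therefore takes the decomposition $\GY{y}(M')\oplus\GY{y}(M'')\cong N_y\otimes_R R_y$ to a decomposition
\[
N_y\cong X'\oplus X'', \qquad X'=\GY{y}(M')\otimes_R R_{y^{-1}}[\len(y)],
\]
and similarly for $X''$. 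Hence $X'$ is a direct summand of $N_y$ in $\mathcal R$. Because $\cB_I$ is by definition closed under direct summands (taken in $\mathcal R$), $X'\in\cB_I$. Finally, $\GY{y}(M')\cong X'\otimes_R\Delta_y=\Dy{y}{X'}$, which is the required form.

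The only step needing care is the claim that direct summands in $\mathcal R$ of an object of $\cB_I$ lie in $\cB_I$. This holds by definition as the paper phrases $\cB_I$: it is generated by the $B_t$ under tensor products, sums, shifts and summands, all taken inside $\mathcal R$. If one insisted on working in a Krull--Schmidt setting instead, I would invoke the Krull--Schmidt property of graded finitely generated bimodules. Together with the fact that $M'\in\FD(W)$, this gives $M'\in\FI$.
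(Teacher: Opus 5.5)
Your proposal is correct and follows the paper's argument: additivity of $\GY{y}$ (and of $\Dy{y}{-}$) handles sums and shifts, and for summands you untwist by $-\otimes_R R_{y^{-1}}[\len(y)]$ to exhibit $\GY{y}(M')\otimes_R R_{y^{-1}}[\len(y)]$ as a direct summand of $N_y\in\cB_I$, exactly as the paper does. One small slip: the displayed decomposition should read $\GY{y}(M')\oplus\GY{y}(M'')\cong N_y\otimes_R\Delta_y$, not $N_y\otimes_R R_y$; your definition of $X'$ and the final identification $\GY{y}(M')\cong\Dy{y}{X'}$ already use the correct shift.
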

\begin{proof}
    From the definition of $\GY{y}$, we can deduce that this functor is additive. Recall that $\FD(W)$ is closed under direct summands (\cite[Remark 5.5]{Soe07}). If $M=M'\oplus M''$, then
    \[
    \GY{y}(M)=\GY{y}(M')\oplus\GY{y}(M'').
    \]
    If $M',M''\in\FI$, so does $M$ because tensor product commutes with direct sum. Conversely, if $M\in\FI$, then $\GY{y}(M)\otimes_R R_{y^{-1}}[\len(y)]\in\cB_I$, all of its direct summands lie in $\cB_I$. Thus, $M',M''\in\FI$.
\end{proof}

For $M\in\FI$, from the proof we can take
\[
 N_y=\bigl(\GY{y}(M)\otimes_R R_{y^{-1}}\bigr)[\len(y)]\in\cB_I.
\]

\begin{proposition}\label{prop:stable}
Let $M\in\FI$ and $s\in S$.
\begin{enumerate}
\item If $y\in\Wmin$ and $ys=ty$ for some $t\in I$, then
\begin{equation}\label{eq:one-coset}
 \GY{y}(\Theta_sM)\cong\Dy{y}{N_y\otimes_R B_t}.
\end{equation}
\item If $y,z=ys\in\Wmin$ and $y<z$, then
\begin{align}
 \GY{y}(\Theta_sM)&\cong\Dy{y}{N_y[1]\oplus N_z},
 \label{eq:transition-lower}\\
 \GY{z}(\Theta_sM)&\cong\Dy{z}{N_y\oplus N_z[-1]}.
 \label{eq:transition-upper}
\end{align}
\item Consequently, $\Theta_s(\FI)\subseteq\FI$.
\end{enumerate}
\end{proposition}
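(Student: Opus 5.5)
The plan is to compute $\GY{y}(\Theta_sM)$ by passing through the coarser double-coset filtration $d_s$, which is stable under right multiplication by $s$. For $y\in\Wmin$ minimal in its double coset $W_IyW_{\{s\}}$, put $U=d_s^{-1}(\{x:x\geq y\})$ and $U'=d_s^{-1}(\{x:x>y\})$. Both are upper sets satisfying $Us=U$ and $U's=U'$. Lemma~\ref{lem:support-translation}(ii) then gives
\[
\Gamma_U(\Theta_sM)/\Gamma_{U'}(\Theta_sM)\cong\Theta_s\bigl(\Gamma_UM/\Gamma_{U'}M\bigr).
\]
Because $d_s$ is order-preserving, the discussion around \eqref{eq:layer-multiplicities} shows that the left side carries the parabolic filtration restricted to the cosets in $W_IyW_{\{s\}}$. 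Hence the parabolic subquotients of $\Theta_sM$ for these cosets can be read off from $\Theta_s$ applied to the double-coset layer of $M$.

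\emph{Case (i).} Here $W_IyW_{\{s\}}=W_Iy$, so the double-coset layer of $M$ is exactly $\GY{y}(M)\cong N_y\otimes_R\Delta_y$. Applying $\Theta_s$ gives $N_y\otimes_R R_y\otimes_R B_s[-\ell(y)]$. I would then check that $R_y\otimes_R B_s\cong B_t\otimes_R R_y$ when $ys=ty$. Indeed, $R_y\otimes_R R\otimes_{R^s}R$ can be rewritten by twisting the $R^s$ through $y$, and $y$ sends $R^s$ to $R^{t}$ because $ysy^{-1}=t$. This yields \eqref{eq:one-coset}.

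\emph{Case (ii).} The double-coset layer $L$ sits in a sequence
\[
0\to N_z\otimes_R\Delta_z\to L\to N_y\otimes_R\Delta_y\to0
\]
of the form \eqref{eq:adjacent-extension}, with $A_1=N_z[-\ell(z)]$ and $A_0=N_y[-\ell(y)]$. By Proposition~\ref{prop:adjacent} this sequence splits after restricting the right action to $R^s$. Since $\Theta_sL=(\Res_{R\text{-}R^s}L\otimes_{R^s}R)[1]$ depends only on the restriction, we get
\[
\Theta_sL\cong\Theta_s(N_z\otimes_R\Delta_z)\oplus\Theta_s(N_y\otimes_R\Delta_y).
\]
Next I would compute each summand. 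We have $R_y\otimes_RB_s=R_y\otimes_{R^s}R[1]$, and this is isomorphic to $R(\Gr(y)\cup\Gr(ys))[1]$, a nonsplit extension
\[
0\to R_z[-1]\to R_y\otimes_R B_s\to R_y[1]\to0,
\]
obtained from \eqref{eq:new-ses} by left tensoring with $R_y$. The same holds with $y$ and $z$ interchanged, since $zs=y$.

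Tensoring on the left by $N_y$ (exact, because $B_s$ and $R_y$ are free as left modules) shows that $\Theta_s(N_y\otimes\Delta_y)$ has parabolic filtration with pieces $N_y\otimes\Delta_y[1]$ in coset $y$ and $N_y\otimes\Delta_z$ in coset $z$. The shifts come from $\ell(z)=\ell(y)+1$. Similarly, $\Theta_s(N_z\otimes\Delta_z)$ has pieces $N_z\otimes\Delta_y$ in coset $y$ and $N_z\otimes\Delta_z[-1]$ in coset $z$. I would identify these pieces with the $\Gamma_{W_Iz}$ submodule and the corresponding quotient, using the fact that the $N\otimes R_y$ part has no sections supported on $\Gr(W_Iz)$. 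Adding up the two summands gives \eqref{eq:transition-lower} and \eqref{eq:transition-upper}. The grading shifts are the only bookkeeping to verify, and I would check them against the character identity $H_yC_s=H_z+vH_y$.

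Part (iii) follows because every $y'\in\Wmin$ falls into case (i) or case (ii), either as $y$ or as $z$, and $N\otimes_R B_t\in\cB_I$. The main obstacle is the compatibility step: showing that the parabolic filtration of $\Theta_sM$ restricted to the double coset really equals the filtration of $\Theta_sL$ induced from the splitting. Concretely, one must check that $\Gamma_{W_Iz}$ of a direct sum is the direct sum of the $\Gamma_{W_Iz}$'s, that the subquotients are as claimed, and that the identification $R_y\otimes_{R^s}R\cong R(\Gr(y)\cup\Gr(z))$ is used with the correct grading.
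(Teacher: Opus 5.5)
Your proposal is correct and follows the paper's proof essentially step for step. The shared steps are: the $d_s$-layers together with Lemma~\ref{lem:support-translation}(ii); the twist $R_y\otimes_R B_s\cong B_t\otimes_R R_y$ in case (i); and Proposition~\ref{prop:adjacent} followed by the two standard filtrations of $B_s$ in case (ii).

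One small slip needs fixing. Literally interchanging $y$ and $z$ in your sequence gives $0\to R_y[-1]\to R_z\otimes_R B_s\to R_z[1]\to 0$, which has the lower coset as submodule. The pieces you then correctly state, $N_z\otimes_R\Delta_z[-1]$ in coset $z$ and $N_z\otimes_R\Delta_y$ in coset $y$, come instead from the other filtration $0\to R[-1]\to B_s\to R_s[1]\to 0$, i.e.\ $0\to R_z[-1]\to R_z\otimes_R B_s\to R_y[1]\to 0$. This is consistent with $H_zC_s=v^{-1}H_z+H_y$.
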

\begin{proof}
Consider the order-preserving map $d_s$ in \eqref{eq:double-projection}. Put
\begin{align}
U_{y,s}=d_s^{-1}\bigl(\{u:u\text{ minimal in }W_IuW_{\{s\}},u\geq y\}\bigr),\\
U_{y,s}^+=d_s^{-1}\bigl(\{u:u\text{ minimal in }W_IuW_{\{s\}},u>y\}\bigr).    
\end{align}
Following from Lemma \ref{lem:ordered-flags} and the discussion in Section \ref{sec:grouping}, $M$ has a filtration whose subquotients are $\Gamma_{U_{y,s}}M/\Gamma_{U_{y,s}^+}M$. $U_{y,s}$ is stable under right multiplication by $s$. Lemma \ref{lem:support-translation}(ii) therefore allows us to commute taking such subquotients and the action of $\Theta_s$. Let us consider the two cases in Lemma \ref{lem:product-order} separately.

\step{(i) A double coset containing one $W_I$-coset.}
Here $ys=ty$ with $t\in I$, and $W_IyW_{\{s\}}=W_Iy$. It follows that
\[
\GY{y}(\Theta_sM)\cong\Theta_s\bigl(\GY{y}(M)\bigr).
\]
For $M\in\FI$, we have $\GY{y}(M)\cong\Dy{y}{N_y}$. Since $ys=ty$, the map $a\mapsto a^{y^{-1}}$ defines an isomorphism $R^s\overset{\sim}{\to}R^t$. Then we have an isomorphism of $R$-bimodules
\[
R_y\otimes_{R^s}R\overset{\sim}{\to}R\otimes_{R^t}R_y,\qquad f\otimes g\mapsto f\otimes g^{y^{-1}}.
\]
Consequently,
\[
R_y\otimes_R B_s\cong B_t\otimes_R R_y.
\]
This yields
\[
\Theta_s\bigl(\GY{y}(M)\bigr)=\Dy{y}{N_y}\otimes_RB_s=N_y\otimes_R\Delta_y\otimes_RB_s=N_y\otimes_RB_t\otimes_R\Delta_y=\Dy{y}{N_y\otimes_RB_t},
\]
which proves \eqref{eq:one-coset}.

\step{(ii) A double coset containing two $W_I$-cosets.}
In this case, $y,z=ys\in\Wmin$, with $y<z$. Write $E=\Gamma_{U_{y,s}}M/\Gamma_{U_{y,s}^+}M$ for the subquotient supported in $W_IyW_{\{s\}}$. Then $\Theta_sE=\Gamma_{U_{y,s}}(\Theta_sM)/\Gamma_{U_{y,s}^+}(\Theta_sM)$, and $E\in\FD(W)$ has a canonical exact sequence
\begin{equation}\label{eq:two-coset-support}
 0\longrightarrow\Dy{z}{N_z}\longrightarrow E
 \longrightarrow\Dy{y}{N_y}\longrightarrow0.
\end{equation}
Here we again use Lemma \ref{lem:ordered-flags}, and the fact that $M\in\FI$. Now apply Proposition \ref{prop:adjacent} with $A_1=N_z[-\len(z)]$ and $A_0=N_y[-\len(y)]$. It gives a split exact sequence after restriction of right action to $R^s$. Extension of scalars back to $R$ is exactly the action of $\Theta_s$ on \eqref{eq:two-coset-support} (up to a shift $[1]$), and $R$ is of finite rank $2$ over $R^s$, thus we obtain
\begin{equation}\label{eq:translated-split}
 \Theta_s E\cong
 \Theta_s\Dy{y}{N_y}\oplus\Theta_s\Dy{z}{N_z}.
\end{equation}
$B_s$ has two exact sequences given by its filtrations:
\begin{align*}
 0&\longrightarrow R_s[-1]\longrightarrow B_s
       \longrightarrow R[1]\longrightarrow0,\\
 0&\longrightarrow R[-1]\longrightarrow B_s
       \longrightarrow R_s[1]\longrightarrow0.
\end{align*}
Tensor on the left by $\Dy{y}{N_y}$ and $\Dy{z}{N_z}$ respectively, these sequences are still exact:
\begin{align}
 0&\longrightarrow\Dy{z}{N_y}
 \longrightarrow\Theta_s\Dy{y}{N_y}
 \longrightarrow\Dy{y}{N_y}[1]\longrightarrow0,
 \label{eq:up}\\
 0&\longrightarrow\Dy{z}{N_z}[-1]
 \longrightarrow\Theta_s\Dy{z}{N_z}
 \longrightarrow\Dy{y}{N_z}\longrightarrow0.
 \label{eq:down}
\end{align}
In each sequence the first term has support in $\Gr(W_Iz)$, while the last term has support in $\Gr(W_Iy)$. Equations \eqref{eq:translated-split}, \eqref{eq:up} and \eqref{eq:down} show that $\GY{y}(\Theta_sM)\cong\Dy{y}{N_y[1]\oplus N_z}$ and $\GY{z}(\Theta_sM)\cong\Dy{z}{N_y\oplus N_z[-1]}$, that is what we need.

Finally, part (iii) follows from (i) and (ii) obviously.
\end{proof}

Now we can prove our main theorem.
\begin{proof}[Proof of Theorem~\ref{thm:main}]
(i) Proposition \ref{prop:stable} (iii) shows that tensor product with $B_s$ preserves the category $\FI$. Starting with $R\in\FI$, all Bott-Samelson bimodules belong to $\FI$ by induction. Lemma \ref{FI-closed} indicates that $\cB(W)\subset\FI$. This is \eqref{eq:main}.

(ii) Applying \eqref{eq:main} to $B_w$ gives $N_y\in\cB_I$. By the Krull-Schmidt theorem and \cite[Satz 6.16]{Soe07}, we can decompose $N_y$ uniquely to a direct sum of indecomposables $B_x^I[n]$ in $\cB_I$. Then tensoring with $\Delta_y$ gives \eqref{eq:multiplicity}.

Since $\Supp(B_w)$ is always contained in a graph of finite set, and $N_y$ has finitely many direct summands, only finitely many triples $(x,y,n)$ occur.
\end{proof}

\section{The positivity theorem}\label{sec:characters}

\subsection{Positivity with respect to hybrid basis}

\begin{lemma}\label{lem:coset-character}
For $N\in\FD(W_I)$ and $y\in\Wmin$, we have
\begin{equation}\label{eq:character-product}
 \ch_\Delta\Dy{y}{N}=\ch_\Delta(N)H_y.
\end{equation}
\end{lemma}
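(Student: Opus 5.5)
The plan is to compute $\ch_\Delta\Dy{y}{N}$ directly from an ordered filtration of $N$, pushed forward along right multiplication by $y$. Both sides of \eqref{eq:character-product} are $\Acoef$-linear in the multiplicities $(N:\Delta_a[n])$. So it suffices to show that
\[
(\Dy{y}{N}:\Delta_u[m]) =
\begin{cases}
(N:\Delta_a[m]), & u=ay,\ a\in W_I,\\
0, & u\notin W_Iy.
\end{cases}
\]
Indeed, this gives $\ch_\Delta\Dy{y}{N}=\sum_{a,m}(N:\Delta_a[m])v^mH_{ay}$. Since $\len(ay)=\len(a)+\len(y)$ for $a\in W_I$ and $y\in\Wmin$, we have $H_aH_y=H_{ay}$, which yields $\ch_\Delta(N)H_y$.

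First take an ordered filtration of $N$ as in Lemma~\ref{lem:ordered-flags}(i), with respect to a decreasing linear extension $a_1,\dots,a_m$ of a finite set $A\subset W_I$ containing the support. Its subquotients are $N_j/N_{j-1}\cong\bigoplus_n\Delta_{a_j}[n]^{\oplus(N:\Delta_{a_j}[n])}$. Since $R_y$ is free as a left $R$-module, $-\otimes_R R_y$ is exact. Hence $N_j\otimes_R\Delta_y$ is a filtration of $\Dy{y}{N}$ with subquotients $\bigoplus_n(\Delta_{a_j}\otimes_R\Delta_y)[n]^{\oplus(N:\Delta_{a_j}[n])}$.

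Next compute each subquotient. From $R_a\otimes_R R_y\cong R_{ay}$ and length additivity,
\[
\Delta_a\otimes_R\Delta_y=R_{ay}[-\len(a)-\len(y)]=\Delta_{ay}.
\]
So the $j$-th subquotient is a direct sum of shifts $\Delta_{a_jy}[n]$ with the same multiplicities as in $N$.

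Finally, since $a\mapsto ay$ is an order isomorphism from $W_I$ onto $W_Iy$ (the first displayed equivalence in the proof of Lemma~\ref{lem:product-order}), the list $a_1y,\dots,a_my$ is a decreasing linear extension of $Ay$. By Lemma~\ref{lem:ordered-flags}(ii), $\Dy{y}{N}\in\FD(W)$.

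The main obstacle is identifying the multiplicities $(\Dy{y}{N}:\Delta_u[m])$, which are defined via $\Gamma_{\geq i}$, with the multiplicities in this particular filtration. I would handle it as follows. The filtration $N_j\otimes_R\Delta_y$ has the same shape as the one in Lemma~\ref{lem:ordered-flags}(i) for the set $Ay$. Each $N_j\otimes_R\Delta_y$ is supported on $\Gr(\{a_1y,\dots,a_jy\})$, so it lies in $\Gamma_{\{a_1y,\dots,a_jy\}}(\Dy{y}{N})$. Equality then follows by comparing graded ranks, or from uniqueness of subquotients in Lemma~\ref{lem:ordered-flags}(i), using that the quotient by $N_j\otimes_R\Delta_y$ has no nonzero sections supported on $\Gr(\{a_1y,\dots,a_jy\})$ because its filtration labels lie outside this set. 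Thus the two filtrations coincide, and the multiplicities read off are exactly $(N:\Delta_a[n])$ at $u=ay$, and $0$ for $u\notin W_Iy$.
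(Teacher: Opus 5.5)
Your proposal is correct and follows the same route as the paper. You push an ordered filtration of $N$ through the exact functor $-\otimes_R\Delta_y$, use that $a\mapsto ay$ preserves Bruhat order on $W_I$ and that $\Delta_a\otimes_R\Delta_y\cong\Delta_{ay}$, read off $(\Dy{y}{N}:\Delta_{ay}[n])=(N:\Delta_a[n])$ from Lemma~\ref{lem:ordered-flags}, and finish with $H_aH_y=H_{ay}$. Your additional check that $N_j\otimes_R\Delta_y=\Gamma_{\{a_1y,\dots,a_jy\}}\Dy{y}{N}$ (because the quotient has no nonzero sections supported on $\Gr(\{a_1y,\dots,a_jy\})$) simply spells out the step the paper compresses into ``by Lemma~\ref{lem:ordered-flags}''.
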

\begin{proof}
For any decreasing linear extension of $W_I$, right multiplication with $y\in\Wmin$ does not change its order. Moreover, right tensor product with $\Delta_y$ is exact and sends $\Delta_a$ to $\Delta_{ay}$. By Lemma \ref{lem:ordered-flags} we have
\[
(N:\Delta_a[n])=(\Dy{y}{N}:\Delta_{ay}[n]).
\]
Using the relation $H_aH_y=H_{ay}$ then \eqref{eq:character-product} follows.
\end{proof}

Then we can prove the positivity of coefficients in the expansion of $C_w$ with respect to the hybrid basis $\{C_xH_y\}$. 

\begin{proof}[Proof of Corollary~\ref{cor:positive}]
(i) The character theorem \cite{EW14} gives
$\ch_\Delta(B_w)=C_w$ and $\ch_\Delta(B_x^I)=C_x$.
By \eqref{eq:layer-multiplicities}, $\ch_\Delta(B_w)=\sum_{y\in\Wmin}\ch_\Delta\bigl(\GY{y}(M)\bigr)$. Applying
Lemma~\ref{lem:coset-character} to Theorem~\ref{thm:main}(ii) gives
\[
 C_w=\sum_{x\in W_I,\ y\in\Wmin,\ n\in\bbZ}
       m^w_{x,y,n}v^n C_xH_y.
\]
The uniqueness of expansion identifies
$p^I_{x,y,w}=\sum_n m^w_{x,y,n}v^n$, which proves the positivity.

(ii)--(iii) Note that both the Kazhdan-Lusztig basis and the hybrid basis are unitriangular over the standard basis. More precisely, for $x\in W_I$
and $y\in\Wmin$,
\[
 C_xH_y=H_{xy}+\sum_{a<x}P_{a,x}(v)H_{ay},
\]
here $P_{a,x}(v)$ is the Kazhdan-Lusztig basis. Then we can deduce (ii) and (iii) from the properties of Kazhdan-Lusztig polynomials.
\end{proof}

We would like to end with some further corollaries to Corollary \ref{cor:positive}.

\subsection{Factorization of Kazhdan-Lusztig polynomials}

Given a Coxeter system $(W,S)$ of a symmetrizable Kac-Moody algebra, Bhattacharya, Mishra and Srivastava \cite{BMS26} proved that the matrix of Kazhdan-Lusztig polynomials factorizes into a product of some matrices, each of which has entries as polynomials with non-negative coefficients. Here is their main idea. For any $I\subset J\subset S$, the Hecke algebra $\cH$ has two corresponding hybrid bases: $\{C_xT_y:x\in W_I,y\in\Wmin\}$ and $\{C_uT_w:u\in W_J,w\in{}^JW\}$. They proved that each basis element $C_uT_w$ expands positively in $\{C_xT_y:x\in W_I,y\in\Wmin\}$. Then the matrix of Kazhdan-Lusztig polynomials is the product of transformation matrices between these hybrid bases, and their entries have non-negative coefficients. Their main result is based on \cite{GH07}, so we can obtain a more general conclusion.

\begin{corollary}
    Let $(W,S)$ be a Coxeter system. If $I\subset J\subset S$, then for any $u\in W_J$, $w\in{}^JW$, $C_uT_w$ expands positively in $\{C_xT_y:x\in W_I,y\in\Wmin\}$:
    \[
    C_uT_w=\sum_{x\in W_I,y\in\Wmin}h^{I,J}_{x,y,u,w}(q)C_xT_y
    \]
    with $h^{I,J}_{x,y,u,w}(q)\in\bbN[v,v^{-1}]$.
\end{corollary}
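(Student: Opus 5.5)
The plan is to reduce the statement to Corollary \ref{cor:positive}, applied to the Coxeter system $(W_J,J)$ with parabolic subset $I\subset J$, and then to multiply on the right by the standard basis element attached to $w\in{}^JW$. Since $I\subset J$, every $u\in W_J$ has a unique factorization $u=x\cdot y'$ with $x\in W_I$, $y'\in{}^I(W_J)$ and $\len(u)=\len(x)+\len(y')$. Here ${}^I(W_J)={}^IW\cap W_J$ is the set of minimal length representatives of $W_I\backslash W_J$. The Kazhdan--Lusztig element $C_u$ of $\cH(W_J)$ equals the $C_u$ of $\cH(W)$, because $\cH(W_J)\subset\cH(W)$ is a subalgebra stable under bar and the KL characterization is intrinsic. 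Corollary \ref{cor:positive}, valid for any Coxeter system, therefore gives
\[
C_u=\sum_{x\in W_I,\ y'\in{}^I(W_J)}p^I_{x,y',u}(v)\,C_xH_{y'},\qquad p^I_{x,y',u}\in\bbN[v,v^{-1}].
\]

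Next, multiply on the right by $T_w=v^{-\len(w)}H_w$ with $w\in{}^JW$. For $y'\in W_J$ and $w\in{}^JW$ we have $\len(y'w)=\len(y')+\len(w)$, so $H_{y'}H_w=H_{y'w}$ and $T_{y'}T_w=T_{y'w}$. The key combinatorial fact is that $y=y'w\in{}^IW$ whenever $y'\in{}^I(W_J)$ and $w\in{}^JW$. This is the standard transitivity of minimal coset representatives: for $t\in I\subset J$, $\len(ty'w)=\len(ty')+\len(w)>\len(y')+\len(w)$. Moreover, the map $(y',w)\mapsto y'w$ is injective, by uniqueness of the factorization $W={}W_J\cdot{}^JW$.

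Finally, rewrite everything in the hybrid basis $\{C_xT_y\}$. We have $C_xH_{y'}=v^{\len(y')}C_xT_{y'}$, hence
\[
C_uT_w=\sum_{x,y'}p^I_{x,y',u}(v)\,v^{\len(y')}\,C_xT_{y'w}.
\]
This is an expansion in the hybrid basis for $I$, with coefficients $h^{I,J}_{x,y'w,u,w}=v^{\len(y')}p^I_{x,y',u}$ and $h=0$ for $y\notin{}^I(W_J)w$. These coefficients lie in $\bbN[v,v^{-1}]$, since multiplying by a power of $v$ preserves positivity. By uniqueness of expansion in a basis, they are the coefficients $h^{I,J}_{x,y,u,w}$. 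Note that the variable $q$ in the statement is read as the Laurent variable $v$, as in the displayed membership.

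The main point needing care, though routine, is the identification of the KL basis of $W_J$ with that of $W$. Corollary \ref{cor:positive} for $W_J$ uses Soergel bimodules for $W_J$ with its own reflection-faithful representation. The resulting identity, however, lives purely in the Hecke algebra, so only the inclusion $\cH(W_J)\subset\cH(W)$ compatible with bar and with $C_u$ is needed. This follows from the uniqueness in the KL theorem, because $P_{a,u}$ for $a,u\in W_J$ is the same in both algebras.
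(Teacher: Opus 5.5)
Your proof is correct and is essentially the paper's argument. The paper quotes from the proof of \cite[Theorem 5.1]{BMS26} the identity $\langle C_xT_y\rangle C_uT_w=\langle C_xT_{y_J}\rangle C_u\,\delta_{{}^Jy,w}$ and then applies Corollary~\ref{cor:positive}, whereas you prove this identity directly by right-multiplying the hybrid expansion of $C_u$ by $T_w$ and using $y'w\in\Wmin$ for $y'\in{}^I(W_J)$, $w\in{}^JW$; applying Corollary~\ref{cor:positive} to $(W_J,J)$ instead of $(W,S)$ changes nothing, since, as you note, $C_u$ and its hybrid expansion are the same in $\cH(W_J)\subset\cH(W)$.
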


\begin{proof}
    Denote by $\langle C_xT_y\rangle C_uT_w$ the coefficient of $C_xT_y$ in the expansion of $C_uT_w$. For $y\in\Wmin$, let $y=y_J\cdot{}^Jy$ be the unique decomposition with $y_J\in W_J,{}^Jy\in{}^JW$. According to the proof of \cite[Theorem 5.1]{BMS26}, we have the following equality.
    \[
    \langle C_xT_y\rangle C_uT_w=\langle C_xT_{y_J}\rangle C_u\delta_{^Jy,w}.
    \]
    Then the positivity follows by Corollary \ref{cor:positive}.
\end{proof}

\subsection{Parabolic Kazhdan-Lusztig polynomials}
Another application of Corollary \ref{cor:positive} is the positivity of parabolic Kazhdan-Lusztig polynomials. These polynomials were defined by Deodhar in \cite{Deo87}, and we recall Soergel's normalization in \cite{Soe97} here. For $I\subset S$, we denote by $\cH_I$ the subalgebra of $\cH$ generated by $\{T_w:w\in W_I\}$. Let $\varepsilon(T_w)=(-1)^{\len(w)}$ be the sign representation of $\cH_I$. The induced representation $\Ind_{\cH_I}^\cH(\varepsilon)$ is an one-dimensional representation $\cH e_I^-$, where $e_I^-$ satisfies
\[
(T_s+1)e_I^-=0\qquad \text{for }s\in I.
\]
$\cH e_I^-$ has two bases: $\{T_we_I^-:w\in W^I\}$ and $\{C_we_I^-:w\in W^I\}$, where $W^I$ is the set of representatives of minimal length in $W/W_I$. The parabolic Kazhdan-Lusztig polynomials $P_{y,w}^{I-}(v)$ are defined to be the coefficients in the expansion
\[
C_we_I^-=\sum_{y\in W^I}P_{y,w}^{I-}(v)T_ye_I^-.
\]
We have $T_yC_xe_I^-=\delta_{e,x}T_ye_I^-$ for $y\in W^I,x\in W_I$. Hence $P_{y,w}^{I-}(v)$ is the coefficient of $T_yC_e$ in the expansion of $C_w$ (with respect to the hybrid basis) for $y\in W^I$ and $w\in W$. In particular, from an opposite version of Corollary \ref{cor:positive} (see \cite[Corollary 3.9]{GH07}), we have the following.
\begin{corollary}
    The parabolic Kazhdan-Lusztig polynomials $P_{y,w}^{I-}(v)$ have non-negative coefficients.
\end{corollary}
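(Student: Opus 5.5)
The plan is to deduce the statement from Corollary~\ref{cor:positive}, after first converting it into its ``opposite'' (right-coset) version by an anti-involution of $\cH$. No new categorical input is needed.

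\emph{Step 1: the opposite hybrid expansion.} Let $\iota:\cH\to\cH$ be the $\Acoef$-linear anti-automorphism with $\iota(T_w)=T_{w^{-1}}$. It respects the quadratic relation and commutes with the bar involution. Since $P_{a,b}=P_{a^{-1},b^{-1}}$, the uniqueness characterization of the Kazhdan--Lusztig basis gives $\iota(C_w)=C_{w^{-1}}$. Also $\iota(H_w)=H_{w^{-1}}$, inversion preserves $W_I$, and $y\mapsto y^{-1}$ is a bijection $\Wmin\to W^I$. Applying $\iota$ to \eqref{eq:positive} for $w^{-1}$ therefore gives
\[
C_w=\iota(C_{w^{-1}})=\sum_{x\in W_I,\ y\in\Wmin}p^I_{x,y,w^{-1}}(v)\,H_{y^{-1}}C_{x^{-1}},
\]
and all coefficients lie in $\bbN[v,v^{-1}]$ by Corollary~\ref{cor:positive}(i). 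This is the opposite version cited from \cite[Corollary 3.9]{GH07}.

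\emph{Step 2: killing the nontrivial $C_x$.} We show $C_xe_I^-=0$ for $e\neq x\in W_I$.
\begin{enumerate}
\item For $s\in I$ we have $C_s=H_s+v=v(T_s+1)$, so $C_se_I^-=0$ by the defining property of $e_I^-$.
\item If $x\neq e$, choose $s\in I$ with $sx<x$. Then $C_x\in\cH C_s$; this is the standard left-ideal property of the Kazhdan--Lusztig basis, proved by induction using $C_sC_u=(v+v^{-1})C_u$ when $su<u$.
\item Hence $C_xe_I^-=hC_se_I^-=0$ for some $h\in\cH$.
\end{enumerate}
Clearly $C_ee_I^-=e_I^-$.

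\emph{Step 3: reading off the coefficient.} Multiply the expansion from Step 1 on the right by $e_I^-$. By Step 2 only the terms with $x=e$ survive:
\[
C_we_I^-=\sum_{y\in\Wmin}p^I_{e,y,w^{-1}}(v)\,v^{\len(y)}\,T_{y^{-1}}e_I^-.
\]
The elements $T_ue_I^-$ with $u\in W^I$ form a basis of $\cH e_I^-$. Comparing coefficients yields
\[
P^{I-}_{u,w}(v)=v^{\len(u)}\,p^I_{e,u^{-1},w^{-1}}(v)\in\bbN[v,v^{-1}].
\]
This is the claimed nonnegativity.

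The main obstacle is bookkeeping rather than mathematics. One must check that $\iota$ really sends $C_w$ to $C_{w^{-1}}$ in the paper's normalization. One must also match Soergel's normalization of $e_I^-$ and $P^{I-}$ in \cite{Soe97}, including the sign convention and the $T$ versus $H$ scaling. Any mismatch there only changes the answer by a power of $v$, which does not affect positivity.
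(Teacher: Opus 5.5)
Your proposal is correct and follows the paper's own argument: take the opposite hybrid expansion (the paper cites it from \cite[Corollary 3.9]{GH07}, and you derive it cleanly via the anti-involution $T_w\mapsto T_{w^{-1}}$), use $C_xe_I^-=\delta_{e,x}e_I^-$ for $x\in W_I$, and read off $P^{I-}_{u,w}$ as a hybrid coefficient. One small slip in Step 2(ii): to get $C_x\in\cH C_s$ you need a right descent $xs<x$; with $sx<x$ you only get $C_x\in C_s\cH_I$, which still suffices because $\cH_I$ acts on $e_I^-$ through the character $\varepsilon$.
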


\bibliographystyle{alpha}
\bibliography{S1}

@book{BB05,
    doi = {},
    url = {},
    issn = {},
    year = {2005},
    pages = {},
    title = {Combinatorics of Coxeter Groups},
    author = {Anders Bj\"orner and Francesco Brenti},
    journal = {},
    publisher = {Springer, New York}
}

@article{EW14,
    doi = {10.4007/annals.2014.180.3.6},
    url = {http://dx.doi.org/10.4007/annals.2014.180.3.6},
    issn = {},
    year = {2014},
    pages = {1089-1136},
    title = {The Hodge theory of Soergel bimodules},
    author = {Ben Elias and Geordie Williamson},
    journal = {Annals of Mathematics},
    publisher = {}
}

@article{GH07,
    doi = {},
    url = {https://math.berkeley.edu/~mhaiman/ftp/llt-positivity/new-version.pdf},
    issn = {},
    year = {2007},
    pages = {},
    title = {Affine Hecke algebras and positivity of LLT and Macdonald polynomials},
    author = {Iain Grojnowski and Mark Haiman},
    journal = {},
    publisher = {}
}

@article{Soe07,
    doi = {10.1017/S1474748007000023},
    url = {http://dx.doi.org/10.1017/S1474748007000023},
    issn = {},
    year = {2007},
    pages = {501-525},
    title = {Kazhdan--Lusztig-Polynome und unzerlegbare Bimoduln \"uber Polynomringen},
    author = {Wolfgang Soergel},
    journal = {Journal of the Institute of Mathematics of Jussieu},
    publisher = {}
}

@article{BMS26,
    doi = {},
    url = {},
    issn = {},
    year = {2026},
    pages = {},
    title = {ON FACTORIZATION OF MATRIX OF
KAZHDAN-LUSZTIG POLYNOMIALS},
    author = {Aritra Bhattacharya and Ashish Mishra and Shraddha Srivastava},
    note = {\href{https://arxiv.org/abs/2602.19508}{https://arxiv.org/abs/2602.19508}}}

@article{Deo87,
    doi = {},
    url = {},
    issn = {},
    year = {1987},
    pages = {483-506},
    title = {On some geometric aspects of Bruhat orderings. II. The parabolic analogue of Kazhdan-Lusztig polynomials},
    author = {Vinay V. Deodhar},
    journal = {J. Algebra},
    publisher = {}}

@article{Soe97,
    doi = {},
    url = {},
    issn = {},
    year = {1997},
    pages = {83-114},
    title = {Kazhdan-Lusztig polynomials and a combinatoric[s] for tilting modules.},
    author = {Wolfgang Soergel},
    journal = {Represent. Theory},
    publisher = {}}

@article{KL79,
    doi = {},
    url = {https://doi.org/10.1007/BF01390031},
    issn = {},
    year = {1979},
    volume = {53},
    pages = {165-184},
    title = {Representations of Coxeter groups and Hecke algebras},
    author = {David Kazhdan and George Lusztig},
    journal = { Invent. Math.},
    publisher = {}}

@inproceedings{KL80, title={Schubert varieties and Poincar'e duality}, author={David Kazhdan and George Lusztig}, year={1980}, url={https://api.semanticscholar.org/CorpusID:117614624} }

@article{Lus85,
    doi = {10.2969/aspm/00610255},
    url = {https://doi.org/10.2969/aspm/00610255},
    issn = {0920-1971},
    year = {1985},
    pages = {255-287},
    title = {Cells in Affine Weyl Groups},
    author = {George Lusztig},
    journal = {Advanced Studies in Pure Mathematics},
    publisher = {Mathematical Society of Japan}}

@book{EMTW20,
  title={Introduction to Soergel bimodules},
  author={Ben Elias and Shotaro Makisumi and Ulrich Thiel and Geordie Williamson},
  isbn={},
  lccn={},
  series={RSME Springer Series},
  url={},
  year={2020},
  publisher={Springer}
}

@article{Wil20,
  author  = {Geordie Williamson},
  title   = {Modular representations and reflection subgroups},
  journal = {Mathematische Zeitschrift},
  volume  = {296},
  pages   = {1--36},
  year    = {2020},
  doi     = {10.1007/s00209-020-02487-2},
  eprint  = {2001.04569},
  archivePrefix = {arXiv},
  primaryClass  = {math.RT}
}

@article{Gec03,
  author  = {Meinolf Geck},
  title   = {On the Induction of Kazhdan-Lusztig Cells},
  journal = {Bulletin of the London Mathematical Society},
  volume  = {35},
  number  = {5},
  pages   = {608--614},
  year    = {2003},
  doi     = {10.1112/S0024609303002236},
  mrnumber = {1989489}
}

\end{document}